\numberwithin{equation}{section}
\newcommand\bigcdot{\mathpalette\bigcdot@{.5}}
\newcommand\bigcdot@[2]{\mathbin{\vcenter{\hbox{\scalebox{#2}{$\m@th#1\bullet$}}}}}
\newcommand{\Cc}{\mathbb{C}}
\newcommand{\N}{\mathbb{N}}
\newcommand{\Z}{\mathbb{Z}}
\newcommand{\R}{\mathbb{R}}
\newcommand{\F}{\mathbb{F}}
\def\unlhds{\ThisStyle{\mathrel{%
      \stackinset{r}{.75pt+.15\LMpt}{t}{.1\LMpt}{\rule{.3pt}{1.1\LMex+.2ex}}
      {\SavedStyle\leqslant}%
}}}
\newcommand{\leqs}{\leqslant}
\newcommand{\geqs}{\geqslant}
\newcommand\restr[2]{{% we make the whole thing an ordinary symbol                   
                \left.\kern-\nulldelimiterspace % automatically resize the bar with \
\right                                                                               
                #1 % the function                                                    
                \littletaller % pretend it's a little taller at normal size          
                \right|_{#2} % this is the delimiter                                 
}}
\newcommand{\littletaller}{\mathchoice{\vphantom{\big|}}{}{}{}}
\newtheorem{thm}{Theorem}[section]
\newtheorem{prop}[thm]{Proposition}
\newtheorem{lemma}[thm]{Lemma}
\newtheorem{cor}[thm]{Corollary}
\theoremstyle{definition}
\newtheorem{defn}[thm]{Definition}
\newtheorem{question}[thm]{Question}
\theoremstyle{remark}
\newtheorem{ex}[thm]{Example}
\title{Rank 1 phenomena for 1-relator and deficiency 1 groups}
\author[Button]{J.\,O.\,Button
}
\address{Selwyn College, University of Cambridge,
Cambridge CB3 9DQ, UK}
\keywords{bounded, generation\
}
\subjclass[2020]{20E06, 20F65}
\begin{document}
\vspace*{-1cm}

\begin{abstract}
  We examine second bounded cohomology and mod p homology in finite index
  subgroups of 1-relator groups and groups with a presentation of
  deficiency at least one. We use this to
  determine exactly which 1-relator groups are boundedly generated, as well
  as the groups of deficiency at least one
  up to a class of groups that conjecturally
  do not exist.
\end{abstract}

        \maketitle

    \setcounter{tocdepth}{1}
\tableofcontents

%%%%%%%%%%%%%%%%%%%                             %%%%%%%%%%%%%%%%                     
%%%%%%%%%%%%%%%%%%%           New Section       %%%%%%%%%%%%%%%%                     
%%%%%%%%%%%%%%%%%%%                             %%%%%%%%%%%%%%%%                     

\section{Introduction}

It has long been known that in semisimple Lie groups, rank 1 lattices and
lattices of higher rank behave very differently, even just when considering
their group theoretic properties. For instance, if we were to take the
groups $G=SL_2(\Z)$ and $H=SL_n(\Z)$ for any $n\geqs 3$, we could
distinguish $G$ from $H$ in many ways. To give a range of examples, $H$ has
distorted infinite cyclic subgroups unlike $G$, but $G$ is large (has a
finite index subgroup possessing a surjective homomorphism to a  non-abelian
free group) whereas $H$ has property (T) and all of its non-trivial
normal subgroups have finite index. As for the asymptotic behaviour of
the number of subgroups of a given finite index (see \cite{lssg} for
details about this), $G$ has the fastest possible subgroup growth for
a finitely generated group and this is of growth type $n^n$, whereas
$H$ has subgroup growth of type $n^{\log{n}/\log{\log{n}}}$. Subgroup growth
is related to profinite and pro-$p$ properties of a group. Whilst neither
$G$ nor $H$ themselves have interesting pro-$p$ behaviour for any prime $p$,
any finite index subgroup $L$ of $H$ has a small pro-$p$ completion,
meaning that $L_{\hat{p}}$ is $p$-adic analytic (see Section
3 for definitions) whereas there exist finite index subgroups
of $G$ for which this is not true.

Continuing with more properties, whilst it is the case that neither $G$ nor
$H$ have any non-trivial surjective homomorphisms to $\R$ (although many
finite index subgroups of $G$ do), $G$ has an infinite dimensional
vector space of quasimorphisms whereas $H$ does not. Finally $H$ has the
property of bounded generation (see Section 6) whereas $G$ is far away from
that.

Given we have so many properties distinguishing rank 1 lattices from those of
higher rank and that these properties are purely group theoretic, we can
take a family $\mathcal F$ of commonly occurring finitely presented groups
along with one of our distinguishing properties above. We can then try to see if
all groups in $\mathcal F$ have this property.
If so then we can claim that
the groups in $\mathcal F$ have either rank 1 or higher rank behaviour.
However the evidence for this particular property might be inconclusive
in that some groups in $\mathcal F$ could have this property and some
do not. Also inconclusive but more compelling would be if some groups
have the property whereas it is open for other groups but no counterexamples
are known. This would then start to point towards rank 1 or higher rank
behaviour for $\mathcal F$. One could then take another of our properties
and see if the same pattern emerges.

This was undertaken in \cite{flm} for $\mathcal F$ being all of the
the mapping class groups of closed orientable connected surfaces of genus
$g$ where $g\geqs 1$. The point there was that it could be argued mapping
class groups sometimes exhibit rank 1 behaviour and sometimes that of
higher rank. However the focus in that paper was on finding rank 1 properties
satisfied by all groups in $\mathcal F$. Those properties were referred to
as rank 1 phenomena and we will adapt that term here. One of their rank 1
phenomena was that of our group $G$ having a finite index subgroup $L$
such that $L_{\hat{p}}$ is not $p$-adic analytic, with Theorem
1.4 of that paper stating that mapping class groups have this rank 1 phenomenon.
This is then used to obtain Corollary 1.5 which says that the profinite
completion of a mapping class group is not boundedly generated and thus mapping
class groups themselves are not boundedly generated.

However another way to see that mapping class groups are not boundedly
generated is that they all have an infinite dimensional space of
homogeneous quasimorphisms (see Section 2 for details),
for instance because they are acylindrically hyperbolic
groups. In this paper we will look at two particular rank 1 phenomena
for a given finitely presented group $G$: the first is that $G$ has an
infinite dimensional space of homogeneous quasimorphisms, which under
the finite presentation hypothesis is equivalent to having infinite
dimensional second bounded cohomology. Our second rank 1 phenomenon is
that $G$ has some finite index subgroup $H$ where there is a prime $p$
such that the pro-$p$ completion $H_{\hat{p}}$ is not $p$-adic analytic.

This then brings us to ask which families of commonly occurring finitely
presented groups might satisfy one or both of our rank 1 phenomena. Our
focus for this paper is the well studied class of 1-relator groups. We
will also consider the class of groups with a
presentation of positive deficiency, that is where we have more generators
than relators. (Incidentally the papers \cite{ltt} and \cite{hll}
classify all groups $G$ of
positive deficiency which actually are lattices in connected Lie groups:
it turns out that $G$ is a lattice in one of 
$PSL(2,\R)$, $PSL(2,\Cc)$, $\R\times PSL(2,\R)$, $\R$ or $\R\times\R$.)
Note that this class generalises that of 1-relator groups,
at least if we ignore the cyclic groups coming from 1-generator
1-relator presentations. Indeed any naturally occurring class of groups
often contains exceptions like these (eg the family $\mathcal F$ above
did not contain the mapping class group of the genus 0 surface).
Our viewpoint here will be that
the exceptions are groups which are too small to assign rank 1 or higher
rank behaviour to them. In the positive deficiency case these small
groups are ($\Z$ and) the soluble Baumslag - Solitar groups $BS(1,n)$.
So now we can ask whether all non-small groups of positive 
deficiency satisfy either of our rank 1 phenomena. It is known
(see Sections 2 and 3) that both
of these hold for any group with deficiency at least two (here there are
no small groups). However for groups of deficiency one, it is unknown for
second bounded cohomology and false for the pro-$p$ phenomenon. But
we can then ask is it true that at least one of these rank 1 phenomena
holds for all of our groups?

In this paper we show that this is indeed true for the family of
non-small 1-relator groups. We also show that this holds for all
non-small groups of positive deficiency, with one potential class
of exceptions which are conjectured not to exist. The outline of
the paper is that we look at when a group has infinite dimensional
second bounded cohomology in Section 2. We are quickly able to use known
results to obtain Corollary \ref{toz} which deals with all groups $G$ of
positive deficiency except when $G$ is an ascending HNN extension of some
finitely generated group $H$.
In Section 3 we introduce our other rank 1 phenomenon concerning pro-$p$
completions of finite index subgroups of $G$. Again deficiency at least two
is easily taken care of and for deficiency one groups, this phenomenon
takes on a particularly tractable form in Definition \ref{vsa} which we
name virtually sufficient homology.

When $G$ is an HNN extension of $H$,
we divide into two cases: where $G=H\rtimes\Z$ and when $G$ is a strictly
ascending HNN extension of $H$. The first case is easily dealt with in
Section 4. We have from known results that $H$ must be free and here
a range of different arguments are available to show that $G$ has both
of our rank 1 phenomena. The heart of the paper is in Section 5 on
strictly ascending HNN extensions. The first problem here is that we
do not now know $H$ will be free, hence the potential
exceptions in the deficiency one case although \cite{bi} Section 2
conjectures that this does not occur. But even if $H=F_r$ is free
of finite rank $r$, the family $\mathcal S$ of
strictly ascending HNN extensions of $F_r$
can be harder to deal with than the free by cyclic groups $F_r\rtimes\Z$.
Although some general results are known for groups $G$ in
$\mathcal S$, for instance
\cite{bs} and \cite{mut}, a particularly intractable case is when 
$G$ contains a Baumslag - Solitar group $BS(1,n)$ for $|n|\geq 2$ whereupon
the geometry of $G$ will not be well behaved. To give a particular
example, if $F_2$ is free on $a,b$ and we consider the injective endomorphism
$\theta$ of $F_2$ sending $a$ to $a^2$ and $b$ to $b$ then there are clearly
elements growing both polynomially and exponentially under $\theta$ but
the strictly ascending HNN extension $\langle t,a,b\rangle$ of $F_2$ will
not be hyperbolic relative to any collection of proper subgroups. However
any free-by-cyclic group $F_r\rtimes_\alpha\Z$ having elements growing both
polynomially and exponentially under the automorphism $\alpha$ will be
relatively hyperbolic.

Therefore we proceed by showing in Theorem \ref{main} that for $r\geq 2$,
any ascending HNN extension of $F_r$ containing $BS(1,n)$ for $|n|\geqs 2$
possesses our second rank 1 phenomenon of virtually sufficient homology.
In doing this, we adapt an argument in \cite{lgv3} to show in Theorem
\ref{melg} that if an injective endomorphism of $F_r$ sends some
non-identity element $w\in F_r$ to a conjugate of $w^r$ then $w$ is a
power of a primitive element.

So we conclude that one or other of our rank 1 phenomena holds for every
non-small group of positive deficiency one (away from the exceptional
case that is not known to occur). However in Section 6 we can apply
this to the property of bounded generation, because either of our rank 1
phenomena holding for $G$ implies that $G$ is not boundedly generated.
Our final section is on 1-relator groups. Here this exceptional case
cannot occur, so one or other of our two rank 1 phenomena does hold for
every non-small 1-relator group. Hence Corollary 7.2 which states that the
boundedly generated 1-relator groups are precisely the cyclic groups and
the soluble Baumslag - Solitar groups $BS(1,n)$. We finish by investigating
what happens in the 1-relator case
if we strengthen the second bounded cohomology condition to
being acylindrically hyperbolic. Now we would expect that any non-cyclic
1-relator group which is neither acylindrically hyperbolic nor has virtually
sufficient homology would be isomorphic to $BS(m,n)$ for $m,n$ coprime.
But we find all the exceptional 1-relator groups $G$ in Theorem \ref{last}
and show it can happen that $G$ is a generalised Baumslag - Solitar group
but not a Baumslag - Solitar group.

The author would like to thank F.\,Fournier - Facio for help with references.

\section{Bounded cohomology}

Here we are interested in the second bounded cohomology and its connection
to quasimorphisms. We give brief details and
definitions because this is an area that is very well covered in the
literature: see the papers cited below and references therein for more detail.

Given any group $G$, let $H^2_b(G;\R)$ be the second bounded cohomology of
$G$, which is an $\R$-vector space. We have the comparison map $c$ which
is a linear map from $H^2_b(G;\R)$ to the usual cohomology $H^2(G;\R)$.
If $Q_h(G)$ is the vector space of homogeneous quasimorphisms
$q:G\rightarrow\R$ (meaning that there exists $D\geqs 0$ such that
$|q(gh)-q(g)-q(h)|\leqs D$ and $q(g^n)=nq(g)$ for all $g,h\in G$ and
$n\in\Z$) and $H^1(G;\R)$ is the vector space of genuine homomorphisms
then the kernel $EH^2_b(G;\R)$ of $c$ (where $E$ stands for exact) can
be identified with the quotient vector space $Q_h(G)/H^1(G;\R)$.
Thus if $G$ is finitely generated and the vector space $Q_h(G)$ has infinite
dimension then so does $H^2_b(G;\R)$. As for the converse,
it is conceivable that there exists a finitely
generated group with a finite dimensional
space of homogeneous quasimorphisms but with $H^2_b(G;\R)$ infinite
dimensional. However if $G$ is finitely presented the the usual cohomology
$H^2(G;\R)$ has finite dimension and so here the converse does hold.

The free group $F_n$ for $n\geq 2$ was initially shown to have infinite
dimensional second bounded cohomology.
As for other groups with this
property, if $G$ has a surjective homomorphism to a group $H$ with infinite
dimensional second bounded cohomology then so does $G$.
It is also the case that if $G$ has infinite dimensional second
bounded cohomology then so do the finite index subgroups of $G$.
However it is not known whether the finite index supergroups of $G$ do.

More generally
by \cite{epfuj}, having infinite dimensional second bounded cohomology
holds for any non-elementary word hyperbolic group. This also holds for
relatively hyperbolic groups (which here will mean any group hyperbolic
relative to some collection of proper subgroups) and even
acylindrically hyperbolic groups, which is essentially in \cite{bstfgn}
despite preceding the definition of acylindrical hyperbolicity. These
results are obtained by analysing a suitable action of our group $G$
on some hyperbolic space. This suggests that further progress can be
made by specialising to actions on simplicial trees. This is indeed
the case as can be seen in \cite{sstocan}, \cite{hffmstrs}. We will just
use two earlier results here, one for HNN extensions and one for amalgamated
free products.
\begin{thm} (\cite{kj2} Theorem 1.2), see also \cite{grig})
\label{kj2hnn}
  Suppose that $G$ is any group that splits as an HNN extension
  (which is equivalent to $G$ having a surjection to $\Z$). If $G$ is
  an HNN extension of $A$ with edge groups $C, \phi(C)$ both
  properly contained in $A$ then $G$
  has infinite dimensional second bounded cohomology.
\end{thm}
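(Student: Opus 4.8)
The plan is to realise $G$ as acting on its Bass--Serre tree $T$ and to extract from that action infinitely many homogeneous quasimorphisms whose images in $Q_h(G)/H^1(G;\R)$ are linearly independent. By the identification $EH^2_b(G;\R)\cong Q_h(G)/H^1(G;\R)$ recalled above, this makes $EH^2_b(G;\R)$, and hence $H^2_b(G;\R)$, infinite dimensional; note that working modulo $H^1(G;\R)$ directly avoids any hypothesis on $H^1$.

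First I would set up the tree. Writing $G=\langle A,t\mid t c t^{-1}=\phi(c)\ (c\in C)\rangle$, the Bass--Serre tree $T$ has a single orbit of vertices with stabiliser a conjugate of $A$ and a single orbit of edges with stabiliser a conjugate of $C$; the action is simplicial, without inversions and cocompact, and after restricting to the minimal invariant subtree I may assume it is minimal. The stable letter $t$ acts as a hyperbolic isometry: under the surjection $G\to\Z$ sending $t\mapsto 1$ and $A\mapsto 0$ its image is nonzero, so $t$ fixes no vertex and the action is unbounded.

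The crucial step, and the one where the hypothesis enters, is to upgrade this to a non-elementary (general type) action. A vertex $v$ fixed by $A$ has link in $A$-equivariant bijection with $A/C \sqcup A/\phi(C)$, so it has degree $[A:C]+[A:\phi(C)]$ with $A$ acting transitively on each of the two families, of sizes $[A:C]$ and $[A:\phi(C)]$. Both indices being at least $2$ makes every vertex have degree at least $4$, so $T$ is not a line and, by minimality, the action is not lineal. It remains to exclude a fixed end $\xi\in\partial T$: if $G$ fixed $\xi$ then $A$ would fix both $v$ and the unique edge at $v$ pointing towards $\xi$, but a fixed edge forces one of the two $A$-orbits in the link to be a singleton, i.e. $[A:C]=1$ or $[A:\phi(C)]=1$, so one associated subgroup equals $A$. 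This is precisely the ascending case excluded by the hypothesis (and realised by $BS(1,n)$). Hence $G$ fixes no point, no end and no pair of ends, so the action is of general type: there are two hyperbolic elements with transverse axes generating a nonabelian free group acting freely on its minimal invariant subtree.

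I would then conclude with a counting construction in the style of Brooks and Fujiwara: to a reduced word $w$ in the free subgroup one associates the quasimorphism measuring signed occurrences of $w$ against $w^{-1}$ along the combinatorial geodesics of $T$, and a carefully chosen infinite family of such words gives elements of $Q_h(G)$ independent modulo $H^1(G;\R)$; equivalently one appeals to the general principle, recorded in the tree-specific references mentioned above, that a non-elementary simplicial action yields infinite dimensional $EH^2_b$. The hard part is the middle step. Ruling out a fixed end, rather than merely exhibiting a free subgroup, is exactly what separates the proper-edge-group hypothesis from the ascending case, and it is transversality of the axes, not just non-amenability, that keeps the counting quasimorphisms linearly independent once one passes from the free subgroup back to all of $G$.
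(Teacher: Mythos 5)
The paper offers no proof of this statement: it is imported verbatim from Fujiwara (\cite{kj2}, Theorem 1.2), so there is nothing internal to compare your argument against and I can only judge it on its own terms. Your first two steps are fine: the link of the vertex fixed by $A$ is $A$-equivariantly $A/C\sqcup A/\phi(C)$, the hypothesis gives degree at least $4$, a globally fixed end would force $A$ to fix an edge at its vertex and hence force $[A:C]=1$ or $[A:\phi(C)]=1$, and so the action on the Bass--Serre tree is of general type.

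The gap is the final step. The ``general principle'' you appeal to --- that a minimal action of general type on a simplicial tree yields infinite dimensional $EH^2_b$ --- is false. The standard counterexample is $SL_2(\Z[1/p])$, which acts minimally and with general type on the Bruhat--Tits tree of $SL_2(\Q_p)$ (equivalently, it is the amalgam $SL_2(\Z)*_{\Gamma_0(p)}SL_2(\Z)$), yet has $EH^2_b=0$ by Burger--Monod rigidity for irreducible higher-rank lattices, hence finite dimensional $H^2_b$; it is even boundedly generated. Note $|\Gamma_0(p)\backslash SL_2(\Z)/\Gamma_0(p)|=2$, which is exactly why Theorem \ref{kj2amg} demands three double cosets rather than mere proper containment of the edge group. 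The real obstruction is not transversality of axes (general type gives that for free) but a \emph{separation} property: for a Brooks--Fujiwara count $h_w$ to be an unbounded quasimorphism of all of $G$, and for the family $\{h_{w_i}\}$ to stay independent, one must produce hyperbolic elements $w$ such that no $G$-translate of the axis of $w$ overlaps a long segment of the axis of $w^{-1}$ in an orientation-reversing way. Constructing such $w$ is where the hypotheses $C\neq A\neq\phi(C)$ genuinely enter, via explicit words in the stable letter and coset representatives of $C$ and $\phi(C)$ in $A$; this is the substance of Fujiwara's proof and of the characterisation in \cite{sstocan}, and it is absent from your argument. As written, your reasoning would equally ``prove'' that $SL_2(\Z[1/p])$ has infinite dimensional $H^2_b$, so while the theorem is true, the proposed route does not establish it.
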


  (Note that any $G$ satisfying these hypotheses must contain $F_2$.)
  \begin{cor} \label{toz}
    If $G$ is any finitely presented group with a surjection to $\Z$ but
    with finite dimensional second bounded cohomology
    then $G$ is an ascending HNN extension
    of a finitely generated group.
  \end{cor}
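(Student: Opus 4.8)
The plan is to deduce the ascending structure directly from the contrapositive of Theorem \ref{kj2hnn}, once the base group of an HNN splitting has been arranged to be finitely generated. Since $G$ surjects onto $\Z$ it automatically splits as an HNN extension, but only over a base that is a priori a possibly infinitely generated subgroup of the kernel, so the first step is to improve this. Here I would invoke the theorem of Bieri and Strebel: because $G$ is finitely presented, a surjection $\phi\colon G\twoheadrightarrow\Z$ yields an HNN decomposition
\[
G=\langle A,t\mid t^{-1}Ct=\phi(C)\rangle
\]
in which the base $A$ is a finitely generated subgroup of $\ker\phi$ and the edge groups $C$ and $\phi(C)$ are subgroups of $A$. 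Finite presentation is essential at exactly this point: for a merely finitely generated group one cannot in general push the finitely many relators into a single ``window'' of the $\Z$-grading, and the base then need not be finitely generated.

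With such a splitting in hand, the second step is formal. Applying Theorem \ref{kj2hnn} to this decomposition in its contrapositive form, the assumption that $H^2_b(G;\R)$ is finite dimensional forces that $C$ and $\phi(C)$ are not both properly contained in $A$; hence at least one of them equals $A$.

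The third step is to check that this is precisely the ascending condition. If $\phi(C)=A$ then $C=tAt^{-1}\leqs A$, so conjugation by $t$ embeds $A$ as a subgroup of itself, while if $C=A$ then $t^{-1}At=\phi(C)\leqs A$ and conjugation by $t^{-1}$ does the same. In either case $G$ is an ascending HNN extension of the finitely generated group $A$, which is what we want.

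I expect the first step to be the only real obstacle, since everything downstream follows mechanically from Theorem \ref{kj2hnn}. The work is in securing a finitely generated base and, equally, in verifying that the Bieri--Strebel decomposition is genuinely an HNN splitting to which Theorem \ref{kj2hnn} applies, with $C,\phi(C)\leqs A$ serving as the associated subgroups. Granting that compatibility, no input beyond Theorem \ref{kj2hnn} is required.
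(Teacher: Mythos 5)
Your proposal is correct and follows essentially the same route as the paper: fold the splitting over $\ker\phi$ into an HNN extension with finitely generated base and edge groups (the paper cites Bieri--Strebel among other sources for exactly this step), then apply Theorem \ref{kj2hnn} in contrapositive to force one edge group to equal the base, giving the ascending structure. Your third step merely spells out what the paper leaves implicit in ``we are now in the conclusion of the corollary.''
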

  \begin{proof} Any HNN extension $G$ which is finitely presented can be
    ``folded'', which involves changing the HNN extension but not $G$ itself,
    to an HNN extension of some group $A$ with edge subgroups
    $C$ and $\phi(C)$ of $A$
    where $A$ and $C$ (thus $\phi(C)$ too) are finitely generated
    (see any of \cite{bistr} or \cite{bsh} or \cite{dnwdybk} VI Theorem 4.5
    for instance). If neither $C$
    nor $\phi(C)$ are equal to $A$ then $G$ has infinite dimensional
    second bounded cohomology by Theorem \ref{kj2hnn}. Otherwise we are now in
    the conclusion of the corollary.
   \end{proof}       
\begin{thm} (\cite{kj2} Theorem 1.1), see also \cite{grig})
\label{kj2amg}
Suppose that $G$ is any group that splits as an amalgamated free
product $A*_CB$.  If $|C\backslash A/C|\geq 3$ and $|B/C|\geq 2$
(these indices are allowed to be infinite) then $G$
  has infinite dimensional second bounded cohomology.
\end{thm}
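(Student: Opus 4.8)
The plan is to produce infinitely many linearly independent classes in $EH^2_b(G;\R)=Q_h(G)/H^1(G;\R)$, which suffices since $EH^2_b(G;\R)$ is by definition a subspace of $H^2_b(G;\R)$. I would work with the action of $G$ on the Bass-Serre tree $T$ of the given splitting $A*_CB$, whose vertices are the cosets $gA,gB$ and whose edges are the cosets $gC$. The hypothesis $|B/C|\geq 2$ says $C\subsetneq B$, and $|C\backslash A/C|\geq 3$ forces $[A:C]\geq 3$ and in particular $C\subsetneq A$, so the amalgam is nondegenerate: the action on $T$ has no global fixed point, is not dihedral, and possesses hyperbolic elements, namely those $g\in G$ whose normal form is cyclically reduced of syllable length at least $2$. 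These hyperbolic elements are the test elements on which the quasimorphisms will be evaluated.

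The quasimorphisms themselves are of Brooks counting type, in the spirit of \cite{grig}. By the normal form theorem for amalgamated products, each $g\in G\setminus C$ is written uniquely as $g=c\,s_1s_2\cdots s_n$ with $c\in C$ and syllables $s_i$ alternating between fixed systems of nontrivial coset representatives of $C$ in $A$ and in $B$. From $|B/C|\geq 2$ I choose $b\in B\setminus C$, and from $|C\backslash A/C|\geq 3$ I choose $a_1,a_2\in A$ representing two distinct nontrivial double cosets, so that $C,Ca_1C,Ca_2C$ are three distinct classes. For each $k$ I form the reduced element $g_k=a_1b\,(a_2b)^k$, which is cyclically reduced of even syllable length, and define a signed counting function $\phi_k$ recording the net number of occurrences of the syllable pattern of $g_k$ minus those of $g_k^{-1}$ inside the cyclically reduced normal form, finally homogenising to $q_k(g)=\lim_n\phi_k(g^n)/n\in Q_h(G)$.

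The two hypotheses enter precisely in guaranteeing that the $\phi_k$ have uniformly bounded defect and that the family $\{q_k\}$ is independent. Having two distinct nontrivial double cosets in $A$ together with a nontrivial coset in $B$ lets me arrange the patterns to be aperiodic and mutually non-overlapping up to the unavoidable two-sided $C$-ambiguity. Consequently, when $g$ and $h$ are concatenated, any occurrence of a pattern in the normal form of $gh$ that is not already an occurrence in $g$ or in $h$ must straddle the single junction, so the counts differ by a bounded amount, which yields the quasimorphism estimate. For independence I evaluate: by cyclic reduction $q_k(g_k)$ grows linearly in $k$, whereas $q_j(g_k)=0$ for $j\neq k$ once the patterns are non-overlapping, and no nontrivial combination of the $q_k$ can be a homomorphism since homomorphisms cannot detect the syllable length of these powers.

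The main obstacle is exactly this combinatorial bookkeeping: controlling how occurrences of the chosen patterns can overlap one another, and themselves, inside a normal form, since an uncontrolled overlap would simultaneously destroy the defect bound and the independence argument. This is where the strict inequalities $|C\backslash A/C|\geq 3$ and $|B/C|\geq 2$ are indispensable. They rule out the degenerate configurations — for instance $C\backslash A/C$ having only the trivial class and one other, which produces infinite-dihedral-type behaviour on $T$ and only finitely many quasimorphisms — and they supply the minimum alphabet needed to make the counting words rigid. Once the rigidity of occurrences is established, the uniform defect bound and the linear independence of the $q_k$ follow in the standard manner, giving an infinite-dimensional subspace of $EH^2_b(G;\R)$ and hence of $H^2_b(G;\R)$.
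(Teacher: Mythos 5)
The paper does not actually prove Theorem \ref{kj2amg}: it is quoted verbatim from \cite{kj2} (Theorem 1.1), with \cite{grig} as an earlier reference, and is used as a black box. So there is no in-paper argument to compare yours against; what can be assessed is whether your sketch stands on its own. Your overall strategy --- Brooks-type counting quasimorphisms built from the normal form of $A*_CB$ and evaluated on cyclically reduced hyperbolic elements for the Bass--Serre tree action --- is indeed the strategy of the cited sources, so the approach is the right one.

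As written, though, the sketch has two genuine gaps. First, the entire technical content of the theorem is the point you defer to ``standard bookkeeping'': when $C$ is infinite, an occurrence of a syllable pattern is only well defined up to multiplying each syllable on either side by elements of $C$, so occurrences must be counted at the level of double cosets, and one must actually prove both that this count has uniformly bounded defect under multiplication (cancellation at the junction shortens the word but also perturbs a surviving syllable by an element of $C$, which is precisely why the hypothesis is $|C\backslash A/C|\geq 3$ and not merely $[A:C]\geq 3$) and that the signed count does not vanish identically because a pattern could be equivalent to its own inverse at the double-coset level. Asserting that the patterns ``can be arranged to be aperiodic and non-overlapping'' names this lemma rather than proving it. Second, the independence computation is wrong as stated: since $g_j=a_1b(a_2b)^j$ is literally an initial segment of $g_k=a_1b(a_2b)^k$ for $j<k$, the pattern of $g_j$ does occur in powers of $g_k$, so $q_j(g_k)\neq 0$ in general for $j<k$. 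What is true is that $q_j(g_k)=0$ for $j>k$ (a run of $j$ consecutive $a_2b$-syllables cannot occur in $g_k^n$), which makes the matrix $\bigl(q_j(g_k)\bigr)$ triangular with nonzero diagonal; that still yields linear independence, and independence modulo $H^1(G;\R)$ then needs the further (easy) observation that these $q_k$ vanish on $A\cup B$, which generates $G$. Both repairs are available, but they constitute the proof rather than decoration around it.
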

(Again any $G$ satisfying these hypotheses must contain $F_2$.)

Two useful corollaries follow from this.
\begin{cor} (\cite{kj2} Corollary 1.2), see also \cite{grig})
\label{kj2inf}
Suppose that $G$ is any group that splits as an amalgamated free
product $A*_CB$.  If $A$ is infinite but $C$ is finite with
$|B/C|\geq 2$ then $G$
  has infinite dimensional second bounded cohomology.
\end{cor}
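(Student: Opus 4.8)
The plan is to deduce this directly from Theorem \ref{kj2amg}. One of its two hypotheses, namely $|B/C| \geqs 2$, is already assumed here, so the only task is to verify the double coset condition $|C\backslash A/C| \geqs 3$. Once both hypotheses of Theorem \ref{kj2amg} are in place, that theorem applies verbatim to the splitting $G = A *_C B$ and delivers the conclusion.

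To verify the double coset condition I would run a short counting argument exploiting the finiteness of $C$. Each $C$-$C$ double coset $CaC$ in $A$ is the image of $C \times C$ under the map $(c_1, c_2) \mapsto c_1 a c_2$, so $|CaC| \leqs |C|^2$; in particular every double coset is finite. Since $A$ is the disjoint union of its $C$-$C$ double cosets and $A$ is infinite, there cannot be only finitely many of them, for otherwise $A$ would be a finite union of finite sets. Hence $|C\backslash A/C|$ is infinite, and in particular at least $3$.

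With both hypotheses confirmed, Theorem \ref{kj2amg} shows that $G$ has infinite dimensional second bounded cohomology, as required.

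The argument carries no real obstacle: all the substance lives in Theorem \ref{kj2amg}, and the reduction is simply the observation that an infinite group cannot be partitioned into finitely many finite double cosets once the amalgamated subgroup is finite. The only point deserving a moment's care is that the bound $|CaC| \leqs |C|^2$, and hence the whole argument, remains valid in the degenerate case where $C$ is trivial, where each element of $A$ forms its own double coset and the infinitude of $|C\backslash A/C|$ is immediate.
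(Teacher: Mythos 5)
Your argument is correct: the paper itself gives no proof of this corollary (it is quoted directly from \cite{kj2}), and your deduction from Theorem \ref{kj2amg} --- finiteness of $C$ forces every double coset $CaC$ to have at most $|C|^2$ elements, so an infinite $A$ must split into infinitely many of them, giving $|C\backslash A/C|\geq 3$ --- is exactly the standard reduction used in the cited source. Nothing further is needed.
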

\begin{cor} (\cite{kj2} Corollary 1.3), see also \cite{grig})
\label{kj2abl}
Suppose that $G$ is any group that splits as an amalgamated free
product $A*_CB$.  If $A$ is abelian, $|A/C|\geq 3$ and $|B/C|\geq 2$
then $G$ has infinite dimensional second bounded cohomology.
\end{cor}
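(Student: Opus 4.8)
The plan is to deduce this directly from Theorem \ref{kj2amg}, whose hypotheses are $|C\backslash A/C|\geq 3$ and $|B/C|\geq 2$. The second of these is already assumed in the corollary, so the only thing to establish is that, under the hypothesis that $A$ is abelian, the double coset count $|C\backslash A/C|$ coincides with the single coset index $|A/C|$, which is assumed to be at least $3$.

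To see this, I would first observe that because $A$ is abelian every subgroup is normal, and in particular $C$ is normal in $A$. For any $a\in A$ the double coset then satisfies $CaC=aCC=aC$, using $Ca=aC$ by normality and $CC=C$ since $C$ is a subgroup. Hence each double coset of $C$ in $A$ is a single left coset, the natural surjection $A/C\to C\backslash A/C$ is a bijection, and so $|C\backslash A/C|=|A/C|\geq 3$. With both hypotheses of Theorem \ref{kj2amg} now verified for the splitting $A*_CB$, that theorem immediately yields that $G$ has infinite dimensional second bounded cohomology, completing the argument.

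As for the main obstacle, there is essentially none: the corollary is a straightforward specialisation of Theorem \ref{kj2amg}, and the only point requiring any thought is the collapse of double cosets to single cosets. This is immediate once one notes that abelianness of $A$ forces $C$ to be normal, which is precisely the feature that lets us convert the (possibly hard to control) double coset hypothesis of the general theorem into the more transparent index hypothesis $|A/C|\geq 3$ appearing here.
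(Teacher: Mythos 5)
Your proof is correct: since $A$ is abelian, $C$ is normal in $A$, so each double coset $CaC$ equals the single coset $aC$, giving $|C\backslash A/C|=|A/C|\geq 3$, and Theorem \ref{kj2amg} applies. The paper states this corollary without proof (importing it from \cite{kj2}, where it is likewise a corollary of the amalgamated product theorem), and your specialisation is exactly the intended derivation.
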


Note: care is sometimes needed, given that we
cannot necessarily pass up to finite
index supergroups. In particular if a finitely generated group 
is virtually free (and not virtually cyclic)
then, as pointed out in the proof of \cite{kj2} Theorem
1.3, we see it has infinite dimensional second bounded cohomology because
it is (non-elementary) hyperbolic, not because it has $F_n$ as a finite
index subgroup. For another example, take any group $G$
with a finite presentation
of deficiency at least two (so at least two more generators than
relators) By the famous result
\cite{bp} of Baumslag and Pride, $G$ is large which means it has a finite
index subgroup possessing a surjective homomorphism to $F_2$. However
this does not directly tell us that $G$ has infinite dimensional second
bounded cohomology so we would need to use
Corollary \ref{toz} for groups of deficiency at least two;
indeed by adapting this argument
\cite{mofup} tells us that these groups are 
not just virtually acylindrically hyperbolic but actually acylindrically
hyperbolic (this is another example of a property which is not known
to pass to finite index supergroups).

For examples of groups which have finite dimensional second bounded cohomology,
this is true for all amenable groups. Our viewpoint here will be that such
groups are too ``small'' to be able to assign rank 1 or higher rank
phenomena to them. The other main source are nearly all
$S$-arithmetic subgroups of linear algebraic groups, in particular
groups such as $SL(n,\Z)$ for $n\geq 3$ (but not $n=2$ as that group is
virtually free).

We will require one application of the above results here. A {\bf generalised
Baumslag - Solitar group} is the fundamental group of a finite graph
of groups where each edge and vertex group is a copy of $\Z$. We also
have {\bf generalised Baumslag - Solitar groups of rank $n$}, where we
  replace $\Z$ with $\Z^n$ for some fixed $n$ in the definition. Although
  we are only use the rank 1 case here, we will assume for the next result
  that the definition includes the rank $n$ case, as the proofs are the same
  for arbitrary $n$.

  \begin{thm} \label{gbs}
    Let $G$ be a generalised Baumslag - Solitar group with underlying graph
    $\Gamma$. Assume without loss
    of generality that the finite graph of groups is reduced; that is the
    inclusion of any edge group into any adjoining vertex group is proper
    unless this edge is a self loop.\\
    (i) $G$ is never acylindrically hyperbolic.\\
    (ii) $G$ does have infinite dimensional second bounded cohomology
    unless $\Gamma$ is a single point, or $\Gamma$ is a single
    edge with index 2 inclusions on both sides, or $\Gamma$ is a single
    loop with (at least) one inclusion of the edge group being equal to the
    vertex group.
 \end{thm}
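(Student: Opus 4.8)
The plan is to treat the two parts separately, deriving (i) from the presence of a commensurated infinite cyclic subgroup and (ii) from the graph-of-groups splitting fed into the three bounded-cohomology results above; the three exceptional graphs will turn out to be exactly the reduced pictures producing amenable groups, whose $EH^2_b$ vanishes.

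For (i), I would first observe that in a reduced GBS group all vertex and edge groups are mutually commensurable, and in fact there is a single infinite cyclic subgroup $E$ commensurated by $G$. Each edge group $\cong\Z$ embeds with finite index into each adjoining vertex group (a nontrivial subgroup of $\Z$ has finite index), so adjacent vertex stabilisers in the Bass--Serre tree $T$ are commensurable; connectedness of $T$ propagates this to all vertex stabilisers, that is, to all conjugates of a fixed vertex group $E=\langle a_{v}\rangle$. Hence $gEg^{-1}$ is commensurable with $E$ for every $g$, so $E$ is s-normal, meaning $E\cap E^{g}$ has finite index and is therefore infinite in $E$. I would then invoke the result of Minasyan and Osin that an s-normal subgroup of an acylindrically hyperbolic group is again acylindrically hyperbolic; since $E\cong\Z$ is not acylindrically hyperbolic, neither is $G$ (the degenerate single-point case $G\cong\Z$ being excluded in any event as virtually cyclic).

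For (ii) I would read a one-edge splitting of $G$ off $\Gamma$ and organise by the structure of the graph. If $\Gamma$ has a non-tree edge $e$, write $G$ as an HNN extension with base $A=\pi_{1}(\mathcal G\setminus e)$ and associated subgroups $C,\phi(C)$. When $e$ is not a loop, reducedness forces both inclusions to have index $\geq 2$, so $C$ and $\phi(C)$ are proper in $A$ and Theorem \ref{kj2hnn} applies; when $e$ is a loop but $\Gamma$ has a further edge, the base $A$ strictly contains the vertex group, so once more both associated subgroups are proper and Theorem \ref{kj2hnn} applies. This leaves the single loop, $G=BS(m,n)$, where the same theorem gives infinite-dimensional $H^2_b$ unless $|m|=1$ or $|n|=1$, precisely the loop exception. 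If instead $\Gamma$ is a tree, a single edge gives an amalgam $\langle a\rangle *_{C}\langle b\rangle$ of two copies of $\Z$ with indices $m,n\geq 2$, and Corollary \ref{kj2abl}, applied to whichever side has index $\geq 3$, yields infinite dimensionality unless $m=n=2$, the Klein bottle exception (the single point being $G\cong\Z$). For a tree with at least two edges I would split over an edge chosen so that one side $A$ retains an edge, taking $B$ to be the other side, so that $|B/C|\geq 2$ holds automatically since the edge group has index $\geq 2$ in its vertex group.

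The main obstacle is verifying the remaining hypothesis $|C\backslash A/C|\geq 3$ of Theorem \ref{kj2amg} in this last case. Here I would use that $A$, having an edge in its reduced graph, is a nontrivial amalgam or HNN extension and hence acts on its own Bass--Serre tree $T_A$ with a hyperbolic element $t$, whereas $C$ lies inside a vertex group and is therefore elliptic. Since $C$ fixes a vertex $x_0$ of $T_A$ it preserves distance from $x_0$, while $d(x_0,t^{n}x_0)\to\infty$; comparing these distances shows the double cosets $Ct^{n}C$ are pairwise distinct for infinitely many $n$, so $|C\backslash A/C|=\infty$ and Theorem \ref{kj2amg} applies. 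Finally I would note that the three exceptional graphs give exactly $\Z$, the Klein bottle group, and the solvable groups $BS(1,n)$, all amenable and so with vanishing $EH^2_b$, which makes the list of exceptions sharp. The rank $n$ case is identical, replacing $\Z$ by $\Z^{n}$ throughout and the phrase ``index $\geq 2$'' by ``proper finite index''.
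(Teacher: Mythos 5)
Your proof is correct, and part (i) is essentially identical to the paper's: both identify a vertex group $V\cong\Z^n$ as $s$-normal (commensurated) via the finite-index edge inclusions and the Bass--Serre tree, then quote Minasyan--Osin. For part (ii) your treatment of the non-tree cases matches the paper's in substance (both reduce to Theorem \ref{kj2hnn} after observing that the base of the HNN splitting properly contains the relevant edge groups whenever the graph is not a single loop), but your tree case takes a genuinely different route. The paper intersects all the vertex and edge groups to produce a normal subgroup $N$ of finite index in each, passes to the quotient $G/N$, observes that this is virtually free and hence non-elementary hyperbolic away from the exceptions, and pulls the infinite-dimensionality back along the surjection $G\to G/N$. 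You instead stay with Fujiwara's amalgam theorem and verify the hypothesis $|C\backslash A/C|\geq 3$ directly: splitting over an edge whose $A$-side still contains an edge, you note $C$ is elliptic in the Bass--Serre tree of $A$ while $A$ contains a hyperbolic element $t$, so the displacements $d(x_0,t^nx_0)$ separate the double cosets $Ct^nC$ and in fact $|C\backslash A/C|=\infty$. Your argument is more hands-on and self-contained within Bass--Serre theory, avoiding the appeal to hyperbolicity of virtually free groups and to the pullback of bounded cohomology along surjections; the paper's quotient trick is shorter and dispatches the whole tree case at once, at the cost of invoking those two external facts. Both correctly isolate the same three exceptional graphs, and your closing observation that the exceptions are amenable (so the list is sharp) matches the paper's remark that they are soluble.
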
   
 \begin{proof}
   The without loss of generality comment in the statement holds because we can
   perform an elementary collapse, that is we can contract any edge which is
   not a self loop if its edge group is equal to a vertex group on one side.
   We then relabel the edge inclusions on the other side, so that we still
   have a generalised Baumslag - Solitar group, and this does not change
   the fundamental group.

   To show $G$ is not acylindrically hyperbolic, note that any vertex group
   $V$ (which is a copy of $\Z^n$) is commensurable with $gVg^{-1}$ for an
   arbitrary $g\in G$, because the inclusions are all finite index, so the
   associated Bass - Serre tree has finite valency but $V$ and $gVg^{-1}$
   are both vertex stabilisers. In the language of \cite{motr} $V$ is
   an $s$-normal subgroup of $G$ but Theorem 3.7 (c) there tells us that any
   $s$-normal subgroup of an acylindrically hyperbolic group has to be
   acylindrically hyperbolic itself.

   For (ii), note that whenever we form an HNN extension or a non-trivial
   amalgamation $A*_CB$ (namely when the edge group $C$ is a proper subgroup
   of both $A$ and $B$) then the resulting group contains the vertex group(s)
   with infinite index. We now consider the generalised Baumslag - Solitar
   group $G$ case by case.

   If the defining graph $\Gamma$ is not a tree then to form our fundamental
   group $G$ we take a maximal tree in $\Gamma$ and contract each edge one
   by one, forming amalgamations each time, until we are left with a single
   point with self loops whereupon we create an HNN extension with each
   loop. Thus unless $\Gamma$ has only one vertex, when we take the first
   self loop in order to form the first HNN extension, we have that at both
   ends of this loop, the edge group is contained in the vertex group with
   infinite index. This property will then continue to hold as we move through
   the other self loops. Thus we are done by Theorem \ref{kj2hnn} unless
   we had one vertex and we are also done if $\Gamma$ consisted of one
   vertex with more than one self loop.

   If $\Gamma$ is just a single self loop then we are again done by
   Theorem \ref{kj2hnn} unless one of the inclusions is equal to the
   vertex group. In other words $G$ is just an ascending HNN extension
   of the soluble group $\Z^n$, so will itself be soluble and thus amenable.

   Otherwise $\Gamma$ is a tree. In this case we can intersect all of
   the finitely many vertex and edge subgroups of $\Gamma$ 
   to obtain a subgroup $N$ contained in each vertex group $V_i$ and
   each edge group $E_{ij}$ with finite index.
   As these are all abelian, $N$ is normal in $V_i$
   and in $E_{ij}$. Thus $N$ is
   normal in $G$ itself because the vertex groups generate $G$,
   since here $\Gamma$ is a tree since no HNN extensions are formed.
   Thus $G/N$ can be seen as the fundamental group of a finite graph of
   groups with
   vertex groups $V_i/N$ and edge groups $E_{ij}/N$. But all vertex and
   edge groups in $G/N$ are finite, hence $G/N$ is
   virtually free. Therefore $G/N$, thus $G$,
   has infinite dimensional second bounded
   cohomology by the comments above, unless $G/N$ is virtually
cyclic and so $\Gamma$ is a point (whereupon
   $G=\Z^n$), or a single edge with index 2 inclusions on either side
   (in which case $G=\Z^n*_C\Z^n$ is soluble) or $\Gamma$ is a single self
   loop with one inclusion equal to the vertex group $\Z^n$, so that $G$
   is an ascending HNN extension of $\Z^n$ and so is again soluble.
   \end{proof}

   In the case $n=1$, our three exceptions for $G$ will be $\Z$, the Klein
   bottle group and the soluble Baumslag - Solitar groups $BS(1,n)$ (which
   gives the Klein bottle again when $n=-1$). It is well known that these
   are precisely the virtually soluble groups of deficiency (at least) one.

\section{Profinite and pro-$p$ techniques}

   From the last section, we see that the mapping
   class group $\mathcal{M}$ (say of a closed orientable surface with genus
   $g\geqs 1$) has infinite dimensional
   second bounded cohomology, because it 
   is acylindrically hyperbolic due to its action on the
   curve complex. This can be viewed as a rank 1 phenomenon, in that it
   is shared with rank 1 lattices of semisimple Lie groups but not
   with higher rank lattices. Another rank 1 phenomenon for $\mathcal{M}$
   is given in \cite{flm} Theorem 1.4 and can be thought of as a feature
   of its finite index subgroups. This is that $\mathcal{M}$ has a finite
   index subgroup $H$ whose pro-$p$ completion $H_{\hat{p}}$ is not a
   $p$-adic analytic group. The background to this is
   explained in \cite{lssg} which is a wide ranging summary of results in this
   area. In particular, for any prime $p$ the $p$-adic analytic groups are
   exactly the pro-$p$ groups of finite Pr\"ufer
   rank, whereupon their subgroup
   growth is polynomial in the index $n$. This allows one to formulate
   an analog of the congruence subgroup property for abstract finitely
   generated groups which again can be regarded as a rank 1 phenomenon,
   as mentioned at the start of
   \cite{lssg} Chapter 7. The idea is that small subgroup
   growth, meaning of growth type strictly less than $n^{\log n}$, is a
   higher rank
   phenomenon, whereas subgroup growth of type strictly bigger than $n^{\log n}$
     is a rank 1 phenomenon. The connection to $p$-adic analytic groups
     is through the famous Golod - Shafarevich inequality.

     As explained in
     \cite{lssg} 4.6, suppose we have a finite presentation
     $\langle X\,|\,R\rangle$ for an abstract group $G$, so here the
     deficiency of this presentation is $|X|-|R|$. For any prime $p$,
     this can be regarded as a pro-$p$ presentation for the pro-$p$
     completion $G_{\hat{p}}$. If $d_p$ is the minimum number of generators
     for $G_{\hat{p}}$ then there exists a pro-$p$ presentation
     $\langle Y_p\,|\,S_p\rangle$ for $G_{\hat{p}}$ where $|Y_p|=d_p$ and the
     deficiency $|Y_p|-|S_p|$ is the same as our original deficiency
     $|X|-|R|$. Now the Golod - Shafarevich inequality is that
     $d_p^2/4\leq |S_p|$ for every pro-$p$ presentation
     $\langle Y_p\,|\,S_p\rangle$ where $|Y_p|=d_p$. Then \cite{lssg}
     Theorem 4.6.4 states that if $G_{\hat{p}}(\ncong\Z_p)$ has an open
     subgroup violating this inequality then $G_{\hat{p}}$ has subgroup
     growth which is strictly bigger than $n^{\log{n}}$, so $G_{\hat{p}}$ is
     not $p$-adic analytic because its subgroup growth is bigger than
     polynomial. But then the subgroup growth of $G$ is also 
     strictly bigger than $n^{\log n}$ so that we have our rank 1
     phenomenon for $G$.

     However it might happen, as in the mapping class group
     (when the genus is at least 3) that a group $G$ has a very small
     pro-$p$ completion for all primes $p$ (say if $G$ is perfect or
     the abelianisation $G/G'$ is cyclic) but $G$ has a finite index subgroup
     $H$ where $H_{\hat{p}}$ is not pro-$p$ analytic for some prime $p$. In
     that case the larger subgroup growth type for $H$ also holds for $G$.

     For a finitely generated group $G$ we have that $d_p$ is equal to the
     rank of $H_1(G,\F_p)$ (say by \cite{gz2} Proposition 4.1.3). For
     groups with positive deficiency, this allows us to detect this
     rank 1 phenomenon in a very direct way.  (This is based
     on \cite{lbsgcg} which deals with lattices in $SL(2,\Cc)$.
     See also \cite{lac} Theorem 5.1 for a similar result on
     finitely generated groups $G$ in terms of the rank of both
     $H_1(G,\F_p)$ and $H_2(G,\F_p)$.)
     \begin{thm} \label{padan}
       Suppose  we have a group $G$
       which is defined by a presentation $\langle X\,|\,R\rangle$
       of deficiency $|X|-|R|$ at least 1. Then the following are
       equivalent:\\
              (i) There exists a finite index subgroup $H$ of $G$ with
       the abelianisation $H/H'$ needing at least 3 generators.\\
       (ii) there exists a finite
       index subgroup $L$ of $G$ and a prime $p$ such that the pro-$p$
       completion $L_{\hat{p}}$ is not $p$-adic analytic\\
       (iii) The supremum of rank($H_1(S,\F_p)$) over all finite index subgroups
       $S$ of $G$ is infinite.\\
       If $|X|-|R|\geq 2$ then these conditions always hold.
\end{thm}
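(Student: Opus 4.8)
The plan is to rephrase everything in terms of mod $p$ homology. For a finitely generated group $K$ one has $d_p(K)=\mathrm{rank}\,H_1(K,\F_p)$ (the minimal number of generators of $K_{\hat p}$), the minimal number of generators of the abelianisation $K/K'$ equals $\max_p \mathrm{rank}\,H_1(K,\F_p)$, and any presentation of deficiency $e$ forces $\mathrm{rank}\,H_1(K,\F_p)\geqs e$ for every prime $p$, since abelianising the presentation mod $p$ exhibits $H_1(K,\F_p)$ with $e$ more generators than relations. With this dictionary, condition (i) says that some finite index $H$ and some prime $p$ satisfy $d_p(H)\geqs 3$, while (iii) says that some prime $p$ makes $\sup_S d_p(S)$ infinite. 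I would prove the three conditions equivalent by establishing the cycle (i)$\Rightarrow$(ii)$\Rightarrow$(iii)$\Rightarrow$(i).

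The implication (i)$\Rightarrow$(ii) is the heart of the argument, and is where positive deficiency and the number three enter. First I would note that any index $n$ subgroup $H$ of $G$ again has a presentation of positive deficiency: collapsing a spanning tree in the degree $n$ cover of the presentation $2$-complex produces a presentation of $H$ of deficiency $n(|X|-|R|)-(n-1)\geqs 1$. Fixing the prime $p$ with $d_p(H)\geqs 3$, the discussion preceding the theorem shows that $H_{\hat p}$ admits a pro-$p$ presentation $\langle Y_p\mid S_p\rangle$ with $|Y_p|=d_p(H)$ and $|Y_p|-|S_p|\geqs 1$, so $|S_p|\leqs d_p(H)-1$. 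The crux is the elementary inequality $d_p(H)-1<d_p(H)^2/4$, valid precisely because $(d_p(H)-2)^2>0$ once $d_p(H)\geqs 3$, and failing exactly at $d_p(H)=2$, which is why three generators rather than two is the threshold. Hence $|S_p|<d_p(H)^2/4$, so $H_{\hat p}$ violates the Golod--Shafarevich inequality; since $d_p(H)\geqs 3$ gives $H_{\hat p}\not\cong\Z_p$, Theorem 4.6.4 of \cite{lssg} shows $H_{\hat p}$ has subgroup growth exceeding $n^{\log n}$ and so is not $p$-adic analytic. Taking $L=H$ gives (ii).

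For (ii)$\Rightarrow$(iii) I would use that a pro-$p$ group is $p$-adic analytic if and only if it has finite Pr\"ufer rank. If $L_{\hat p}$ is not $p$-adic analytic then the minimal number of topological generators of its open subgroups is unbounded. Open subgroups $U$ of $L_{\hat p}$ correspond to finite index subgroups $S$ of $L$, hence of $G$, via $U=\overline S$, and $\mathrm{rank}\,H_1(U,\F_p)=\mathrm{rank}\,H_1(S,\F_p)$ because mod $p$ homology is a pro-$p$ invariant; thus $\sup_S \mathrm{rank}\,H_1(S,\F_p)=\infty$, which is (iii) for this prime $p$. The implication (iii)$\Rightarrow$(i) is then immediate: if the supremum is infinite then some finite index $S$ has $\mathrm{rank}\,H_1(S,\F_p)\geqs 3$, so $S/S'$ needs at least three generators and we take $H=S$.

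Finally, for the closing claim I would treat $|X|-|R|\geqs 2$ by boosting deficiency under covers. Since $\mathrm{rank}\,H_1(G,\F_2)\geqs \mathrm{def}(G)\geqs 2$, the group $G$ surjects onto $\Z/2$ and so has an index $2$ subgroup $H$; by the cover computation above, $\mathrm{def}(H)\geqs 2(\mathrm{def}(G)-1)+1\geqs 3$, whence $\mathrm{rank}\,H_1(H,\F_p)\geqs 3$ for every prime $p$ and (i) holds, so all three conditions do. The main obstacle I anticipate is not any single step but marshalling the cited inputs correctly: verifying that the pro-$p$ deficiency really inherits the abstract deficiency, so that $|S_p|\leqs d_p(H)-1$, and checking that the Golod--Shafarevich machinery of \cite{lssg} applies to $H_{\hat p}$ itself rather than only to some open subgroup. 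The clean numerical threshold $(d_p-2)^2>0$ makes the role of the number three transparent, and everything else is bookkeeping with the homology dictionary.
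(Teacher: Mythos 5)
Your proposal is correct and follows essentially the same route as the paper: (i)$\Rightarrow$(ii) by combining the inherited positive deficiency of $H$ with the Golod--Shafarevich inequality (your explicit threshold computation $(d_p-2)^2>0$ is just the worked-out version of the paper's ``thus we violate the Golod--Shafarevich result''), (ii)$\Rightarrow$(iii) via infinite Pr\"ufer rank and the correspondence between open subgroups of $L_{\hat p}$ and finite index subgroups of $L$, and (iii)$\Rightarrow$(i) trivially. The only (harmless) deviation is in the final claim: the paper observes that for deficiency at least $2$ the inequality $d_p^2/4\leqs d_p-2$ is impossible, so (ii) holds for $G$ itself, whereas you pass to an index $2$ subgroup to boost the deficiency to $3$ and verify (i) instead; both work.
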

       \begin{proof}
         Suppose (i) holds then $H$ will also have a presentation of
         deficiency at least 1
     (say by the Reidemeister - Schreier process),
         and there will be some $p$
         such that the rank of $H_1(H,\F_p)\geq 3$.
         Then on applying the above comments to $H$, we obtain a pro-$p$
         presentation $\langle Y_p\,|\,S_p\rangle$ for $H_{\hat{p}}$ where
         $|Y_p|-|S_p|$ is equal to the deficiency of $H$ which is at least
         1 and $d_p=H_1(H,\F_p)\geq 3$.
         Thus we violate the Golod - Shafarevich result above, so
         $H_{\hat{p}}$ is not $p$-adic analytic for this prime $p$.

      If (ii) holds then $L_{\hat{p}}$ has infinite Pr\"ufer rank. This means
      that the number of topological generators needed for an open subgroup
      $P$ of $L_{\hat{p}}$ is unbounded as we vary over all such $P$.
      But each $P$ corresponds to an abstract
      finite index subgroup $S$ of $L$ which is subnormal in $L$ and with
      the rank of $H_1(S,\F_p)$ equal to that of $H_1(P,\F_p)$, thus our
supremum is infinite.

Then it is clear that (iii) implies (i). In the case of deficiency
at least 2, the Golod - Shafarevich inequality never holds so (ii) is
satisfied by $G$ itself.
\end{proof}

     Thus we can think of any group $G$ in Theorem \ref{padan}
     as satisfying a different rank 1 phenomenon to that of having
     infinite dimensional bounded second cohomology from the last section.
     Note that because of Theorem \ref{padan}, it makes sense to allow for
     finite index subgroups of $G$ rather than expecting $G$ itself to
     satisfy these conditions. For instance any group $G$ defined by a
     2-generator, 1-relator presentation will have $G/G'$ needing at most
     2 generators.

     We now formalise our rank 1 phenomenon in the case of deficiency 1
     presentations by using the following definition.

     \begin{defn} \label{vsa}
       A group $G$ defined by a deficiency 1 presentation
       has {\bf virtually sufficient abelianisation} if there is a finite index
       subgroup $H$ with $H/H'$ having rank at least 3.
     \end{defn}

     Certainly not all deficiency 1 groups will have virtually sufficient
     abelianisation, e.g. $\Z$ and $\Z\times\Z$. Indeed $BS(m,n)$ has
     virtually sufficient homology if and only if $m$ and $n$ are coprime,
     for instance see \cite{me1r} Section 4
     where many other examples of 2-generator 1-relator
     groups without virtually sufficient homology are given. (It might be
     noted here that all those other exceptions seem to be acylindrically
     hyperbolic; we return to this point in the last section.)
          However one case which
     does satisfy this definition is when our group $G$ is linear in
     characteristic zero but not virtually soluble. Then (for any $p$)
     the supremum of $H_1(H,\F_p)$ over all finite index subgroups $H$
     of $G$ is infinite (see \cite{lssg} Window 9, Corollary 18). Thus any
     deficiency 1 group which is not virtually soluble but which is linear
     in characteristic zero has virtually sufficient homology.
     Finally
     we have mentioned that this rank 1 phenomenon implies subgroup growth
     that is strictly bigger than $n^{\log{n}}$. But in the deficiency
     1 case, \cite{lac} Theorem 1.5 (ii) states that if $G$ has virtually
     sufficient abelianisation then its subgroup growth will be a
     lot faster, in fact not far off exponential growth $n^{n/\log{n}}$.

     \section{Free by cyclic groups}

     We have seen that the only candidates for deficiency 1 groups $G$ which
     do not have infinite dimensional second bounded cohomology are
     those which are an ascending HNN extension of a finitely generated
     group. This can be divided into two cases, where the ascending HNN
     extension is strict which we deal with in the next section, and
     otherwise whereupon $G$ is of the form $K\rtimes\Z$ for $K$
     the finitely generated kernel of our map to $\Z$ in Corollary \ref{toz}.
     (If the first Betti number of $G$ is at least 2 then it is possible for
     $G$ to be in both cases, but we would then apply the stronger results
     in this section to $G$.)
     
Here we define a free by cyclic group as a group of the form
$F_r\rtimes\Z$ where $F_r$ is a free group of finite rank $r$. We
allow any $r$, including $\Z$ for $r=0$, as well as
$\Z\times\Z$ and the Klein bottle
group for $r=1$. These last three exceptional groups do not satisfy either of
our two rank 1 phenomena but this is due to them being ``small'' groups.
All free by cyclic groups have a deficiency 1 presentation. Conversely
a result in \cite{dss} tells us that if $G$ is of the form
$K\rtimes\Z$ for $K$ finitely generated and $G$ has a deficiency 1
presentation then $K$ is free. If $r\geq 2$ then there are many ways to
see that any such $G$ has both of our rank 1 properties. However
one method we
cannot use for virtually sufficient abelianisation is by linearity, as
it is unknown if all free by cyclic groups are linear. Here we proceed
by using mainly acylindrical hyperbolicity for the bounded cohomology
and a short argument for the virtually sufficient abelianisation.

\begin{thm} \label{fbyz}
  Any free by cyclic group $G=F_r\rtimes \Z$ with $r\geq 2$
  has infinite dimensional second bounded cohomology and virtually
  sufficient abelianisation.
\end{thm}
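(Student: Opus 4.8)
The plan is to treat the two conclusions separately. I would prove virtually sufficient abelianisation by a single homological computation valid for every monodromy, and establish infinite dimensional second bounded cohomology by splitting according to whether the monodromy $\alpha\in\Aut(F_r)$ has finite or infinite order in $\Out(F_r)$. Throughout write $G=F_r\rtimes_\alpha\Z$, with associated extension $1\to F_r\to G\to\Z\to1$; note that $G$ has the deficiency $1$ presentation $\langle x_1,\dots,x_r,t\mid tx_it^{-1}=\alpha(x_i)\rangle$, so Definition \ref{vsa} applies.

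For virtually sufficient abelianisation I would fix a prime $p$ and take $F'=\ker(F_r\to H_1(F_r;\F_p))$, a characteristic subgroup of index $p^r$. Being characteristic it is $\alpha$-invariant, and by Nielsen--Schreier it is free of rank $N=1+p^r(r-1)$, which is at least $3$ since $r\geqs2$. Now $\alpha$ induces an automorphism of the finite-dimensional space $V=H_1(F';\F_p)\cong\F_p^{N}$, necessarily of some finite order $e$. Setting $H=\langle F',t^e\rangle=F'\rtimes_{\alpha^e}\Z$, a subgroup of index $p^re$ in $G$, the monodromy $\alpha^e$ acts trivially on $V$, so the Wang sequence of $1\to F'\to H\to\Z\to1$ yields $\dim_{\F_p}H_1(H;\F_p)=1+\dim_{\F_p}V=1+N\geqs3$. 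Hence $H/H'$ needs at least three generators and $G$ has virtually sufficient abelianisation; this is the short argument promised.

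For the bounded cohomology I would first suppose $\alpha$ has infinite order in $\Out(F_r)$ and appeal to the structure theory of free-by-cyclic groups: $G$ is word hyperbolic when $\alpha$ is atoroidal, and hyperbolic relative to the proper subgroups carrying the polynomially growing part when $\alpha$ has mixed growth (as indicated in the introduction), so that in every infinite-order case $G$ is acylindrically hyperbolic. By the results recalled in Section 2 (via \cite{bstfgn}) this gives infinite dimensional second bounded cohomology.

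It remains to handle the case where $\alpha$ has finite order in $\Out(F_r)$, say $\alpha^k=\mathrm{conj}_g$ for some $g\in F_r$. Here I would observe that $z=g^{-1}t^k$ centralises $F_r$ and has infinite order, so $N=\langle F_r,t^k\rangle=F_r\times\langle z\rangle\cong F_r\times\Z$ is normal of index $k$; as $r\geqs2$ its centre is exactly $\langle z\rangle\cong\Z$, which is characteristic in $N$ and hence normal and amenable in $G$. Since an amenable normal subgroup is invisible to bounded cohomology, $H^2_b(G;\R)\cong H^2_b(G/\langle z\rangle;\R)$, and $G/\langle z\rangle$ contains $N/\langle z\rangle\cong F_r$ with finite index, so it is a non-elementary virtually free group and therefore non-elementary hyperbolic, giving infinite dimensional second bounded cohomology by \cite{epfuj}; thus so does $G$. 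The step I expect to be most delicate is the infinite-order case: asserting acylindrical hyperbolicity uniformly across all infinite-order monodromies (in particular the purely polynomially growing ones) rests on the free-by-cyclic structure theory, and one must check that the finite-order case, where $G$ has infinite centre and is genuinely not acylindrically hyperbolic, really is covered by the separate amenable-quotient argument above.
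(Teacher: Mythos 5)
Your proposal is correct, and in two places it genuinely diverges from the paper's route. For virtually sufficient abelianisation the underlying idea is identical to the paper's (pass to a cyclic cover $\langle F, t^e\rangle$ on which the monodromy acts trivially on mod~$p$ homology, so the coinvariants contribute their full rank), but you first descend to the mod~$p$ congruence subgroup $F'\leqs F_r$ before doing so; this is harmless but unnecessary, since the paper applies the same trick directly to $F_r$ and $r+1\geqs 3$ already. The real difference is in the finite-order case of the bounded cohomology statement: the paper invokes \cite{lvrf} Proposition 4.1 to realise $G$ as a generalised Baumslag--Solitar group and then applies Theorem \ref{gbs}(ii), whereas you locate the centre $\langle z\rangle\cong\Z$ of the finite-index normal subgroup $F_r\times\Z$, observe it is normal in $G$, and pass to the quotient $G/\langle z\rangle$, which is non-elementary virtually free. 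This is a clean alternative; note that you do not even need the full ``amenable normal subgroups are invisible to $H^2_b$'' theorem -- the fact recorded in Section 2, that infinite dimensional second bounded cohomology pulls back along surjections, already suffices once $G/\langle z\rangle$ is known to be non-elementary hyperbolic. For the infinite-order case both you and the paper lean on the literature for acylindrical hyperbolicity; the one caveat, which you rightly flag yourself, is that your stated dichotomy (atoroidal versus mixed growth) does not on its face cover purely polynomially growing automorphisms of infinite order, so the blanket claim that $\alpha$ of infinite order in $\Out(F_r)$ forces acylindrical hyperbolicity must be taken from \cite{ghst} or \cite{dm?} as the paper does, rather than deduced from the two sub-cases you list.
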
  
\begin{proof} For the first part, we are done if $G$ is hyperbolic,
  relatively hyperbolic or acylindrically hyperbolic. By \cite{ghst}
  or \cite{dm?}, we will find that $G$ is acylindrically hyperbolic
  unless the defining automorphism $\alpha$ of $F_r$ has finite order
  in $Out(F_r)$ whereupon $G$ is virtually $F_r\times \Z$. Although virtually
  is not useful to us here, we note that $G$ is definitely not acylindrically
  hyperbolic, as if it were then $F_r\times\Z$ would be too.
Instead we argue that by \cite{lvrf} Proposition 4.1 we see $G$ is
a generalised Baumslag - Solitar group (here we mean the $r=1$ case).
Thus by Theorem \ref{gbs} and the fact that $G$ is not soluble, we
conclude that $G$ has infinite dimensional second bounded cohomology.

For virtually sufficient abelianisation, we use the trick
(as in \cite{lac} Proposition 8.7 or \cite{mearx} Lemma 3.6)
that $\alpha$ gives rise to the abelianised automorphism
$\overline{\alpha}$ of $\Z^r$ and hence to the mod $p$ abelianised
automorphism $\overline{\alpha}_p$ of $\F_p^r$. As the mod $p$ automorphism
has finite order $d$ say, we can take the $d$-fold cyclic cover $H$ of $G$,
which is $\langle t^d,F_r\rangle$ where $t$ is the stable letter
inducing $\alpha$ by conjugation. Then we have that
$(\overline{\alpha}_p)^d=\overline{(\alpha^d)}_p$ is the identity. But $H$
has index $d$ in $G$ and the rank of $H_1(H,\F_p)$ is $r+1$, with the 1
coming from the $\Z$ in the abelianisation of $H$ given by the stable
letter $t^d$. As $r+1\geq 3$, we are done for any prime $p$.
\end{proof}

\section{Strictly ascending HNN extensions}

We have now considered both of our rank 1 phenomena for all presentations
of deficiency at least 1, except for when the presentation has deficiency
exactly 1 and the resulting group is an HNN extension $\langle t,A\rangle$
where $t$ is the stable letter, the vertex group $A$ is finitely
generated and the HNN extension is formed from an endomorphism
$\theta:A \rightarrow A$ which is injective but not surjective. The obvious
case where this case results in a deficiency 1 presentation is
when $A$ is a free group $F_r$ of rank at least 1. For $r=1$ we have
the Baumslag - Solitar groups $BS(1,n)$
where $|n|\geqs 2$, which we have seen are soluble and therefore too ``small''
to satisfy either of our rank 1 phenomena.
These particular
Baumslag - Solitar groups are also {\bf non - Euclidean} which we define
to mean Baumslag - Solitar groups $BS(m,n)$ where $|m|\neq |n|$.

Of course if a group is hyperbolic then it cannot contain any
Baumslag - Solitar
group $BS(m,n)$, where $m,n\neq 0$ and we can say without loss of generality
given the symmetries in the presentation that $1\leqs m$ and $m\leqs |n|$.
In fact no ascending HNN extension of $F_r$ can contain $BS(m,n)$
if $m\geqs 2$, in which case $BS(m,n)$ contains $\Z\times\Z$ anyway, 
so the obstruction to hyperbolicity here is the existence of subgroups
isomorphic to $BS(1,n)$.
In \cite{mut} the converse was shown for strictly ascending HNN extensions.
Note that for these groups $G$ the presence of a
Baumslag - Solitar subgroup is equivalent to the existence of a {\bf periodic
  conjugacy class} for the endomorphism $\theta:F_r\rightarrow F_r$;
namely we have elements $x\neq e$ and $g$ in $F_r$ and
$d\in\Z\setminus\{0\},i>0$ such that $\theta^i(x)=gx^dg^{-1}$
(so that it is the conjugacy class of $x$ in $F_r$ which is periodic).
Thus we do have infinite dimensional second bounded cohomology for $G$ if 
there are no periodic conjugacy classes for $\theta$.

However strictly ascending HNN extensions of free groups $F_r$ for
$r\geqs 2$ which contain non - Euclidean Baumslag - Solitar
groups seem quite mysterious.
They are all known to be residually finite by \cite{bs}.
However we have the following examples.
\begin{ex} \label{drsp}
  Following \cite{drusap}, given $r\geqs 1$ and $d\neq 0$ we define
  the {\bf Dru\c{t}u - Sapir group} $DS(r,d)$ to be the group
  with presentation
  \[\langle t,a_1,\ldots ,a_r\,|\,ta_1t^{-1}=a_1^d, \ldots ,ta_rt^{-1}=a_r^d
    \rangle\]
  which is clearly an ascending HNN extension of $F_r$ and is strictly
  ascending if and only if $|d|\geqs 2$. If $|d|=1$ or $r=1$ then it is
  a linear group but it is shown in \cite{drusap} that otherwise it is
  non linear. Moreover consider the ascending HNN extension $C(r,d)$
  of $F_r$ with presentation
  \[\langle t,a_1,\ldots ,a_r\,|\,ta_1t^{-1}=a_2, \ldots ,
    ta_{r-1}t^{-1}=a_r,ta_rt^{-1}=a_1^d
    \rangle.\]
  Note that $C(r,d)$ has a 2-generator 1-relator presentation, given by
  eliminating $a_r$ then $a_{r-1}$ and so on, right up to $a_2$. But the
  obvious cyclic cover of $C(r,d)$ having index $r$ is $DS(r,d)$, so
  that $C(r,d)$ is also not linear if $r\geqs 2$ and $|d|\geqs 2$.
\end{ex}

However other ascending HNN extensions of $F_r$ for $r\geqs 2$
can contain non - Euclidean Baumslag - Solitar groups but still be
linear, for instance in \cite{caldun} where in $SL(2,\Cc)$ they take
$t$ and $F_6=\langle a\rangle *F_5$, with $t$ conjugating $F_6$ into itself
such that $tat^{-1}=a^3$.
The Dru\c{t}u - Sapir groups are clearly not hyperbolic and are not
relatively hyperbolic, though they are acylindrically hyperbolic if
$r\geqs 2$ and $|d|\geq 2$.

We now turn to asking which strictly ascending HNN extensions $G$ of finitely
generated free groups have virtually sufficient homology. Note that the
argument used in the last section for free by cyclic groups will not work in
general now. Indeed if the defining endomorphism $\theta$ sends every
generator of $F_r$ into the commutator subgroup $F_r'$ then the induced
mod $p$ abelianised endomorphism of $\F_p^r$ will be zero for every prime $p$.
On the other hand, if $G$ is word
hyperbolic and the defining endomorphism $\theta$ is irreducible then
\cite{hgws} shows that $G$ is cubulated, hence is large and so has
virtually sufficient homology. However this ``hence'' involves going from
cubulation, namely acting freely and cocompactly on a CAT(0) cube complex,
to being virtually special, namely virtually embedding in a RAAG, and this
requires the full Agol - Wise machinery. We do not know of a more elementary
proof of virtually sufficient abelianisation in this case.
Furthermore if $G$ is hyperbolic but $\theta$ is reducible then $G$ is not
known to be cubulated. Here we do not know of an argument that shows
$G$ has virtually sufficient homology, although hyperbolicity means $G$
satisfies our other rank 1 phenomenon.

By \cite{mut} mentioned above, if $G$ is not hyperbolic then
we can now assume that $G$ contains $BS(1,d)$
for $|d|\geq 1$. Relative or acylindric hyperbolicity will not help us
with the homology of finite index subgroups (and nor will linearity,
given Example \ref{drsp}) so we now divide into
two cases according to whether $G$ contains $BS(1,1)$
(or $BS(1,-1)$ but this itself contains $BS(1,1)$),
or just $BS(1,d)$
for $|d|\geqs 2$. In the former case it was shown in \cite{med1} that
$G$ is large and hence has virtually sufficient homology, because for
any $n$ there will exist a finite index subgroup $H$ of $G$ which
possesses a surjective homomorphism to $\Z^n$. We now deal with the latter
case, although first we review some consequences of the 
Reidemeister - Schreier rewriting process which is itself described
in \cite{ls} Proposition II.4.1.

Suppose we have a group $G$
given by the presentation $\langle X\,|\,R\rangle$
(not assumed finite here) and we want to get a presentation for a subgroup
$H$ (not assumed to be finite index)
of $G$. This can be done by taking
a Schreier transversal $S$ for $H$ in $G$ which
is a right transversal for $H$ in $G$, given as words in $X$,
having the property that any initial subword of $s\in S$
is also in $S$. Note that (unless $S=\{e\}$ in which case $H=G$) some element
of $X$ or its inverse must appear in $S$. For any $g\in G$, let $\overline{g}$ 
be the representative of $g$ in $S$, that is the unique $s\in S$ such that
$Hs=Hg$. We then have that
\[Y:=\{sx(\overline{sx})^{-1}\,|\,s\in S,x\in X\}\]
is (throwing away occurrences of the identity) a generating set
for $H$. As for obtaining a set of relators for $H$,
we can do this by considering for $s\in S$ and $r\in R$ the relator
$srs^{-1}$ which can then be
written in terms of the elements of $Y$ to obtain a relator
in $Y$. Doing this over all $s\in S$ and $r\in R$ gives us a defining
presentation for $H$ with these relators and our generating set $Y$.

We first consider what happens when $G$ is a free group. Here
we say that $g\in G$ is {\bf primitive} in $G$ if it is an element of some
free basis for $G$.

\begin{lemma} \label{prim}
  Let $G$ be a free group and $g$ be a primitive element of $G$. Suppose
  we have a subgroup $H$ of $G$ and $k>0$ such that $g^k\in H$ but
  $g,g^2,\ldots ,g^{k-1}$ is not in $H$. Then $g^k$ is a primitive element
  of $H$.
\end{lemma}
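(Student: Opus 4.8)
The plan is to run the Reidemeister--Schreier procedure recalled just above for a carefully chosen Schreier transversal, arranged so that $g^k$ appears \emph{directly} as one of the resulting free generators of $H$. Since $g$ is primitive, first extend $\{g\}$ to a free basis $X$ of $G$ with $g\in X$, so that every power $g^i$ is a reduced word in $X$. The hypothesis that $g^k\in H$ while $g,\dots,g^{k-1}\notin H$ says precisely that the cosets $H,Hg,\dots,Hg^{k-1}$ are pairwise distinct while $Hg^k=H$; equivalently $H\cap\langle g\rangle=\langle g^k\rangle$.

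Next I would produce a Schreier transversal $S$ for $H$ in $G$ that contains the prefix-closed set $\{e,g,g^2,\dots,g^{k-1}\}$. This is legitimate because that set is itself a prefix-closed system of representatives for the $k$ distinct cosets $Hg^i$, and any such partial transversal extends to a full Schreier transversal: in the Schreier coset graph it is the simple path $H\to Hg\to\cdots\to Hg^{k-1}$ of edges labelled $g$, which is a subtree rooted at $H$ and hence extends to a spanning tree (using Zorn's lemma when $H$ has infinite index).

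Now apply Reidemeister--Schreier with the basis $X$ and the transversal $S$. Because $G$ is free, the nonidentity elements of $Y=\{sx(\overline{sx})^{-1}\,:\,s\in S,\ x\in X\}$ form a free basis of $H$ (the constructive Nielsen--Schreier theorem underlying \cite{ls} Proposition II.4.1). Taking $s=g^{k-1}$ and $x=g$ gives $sx=g^k$, and since $Hg^k=H$ the representative is $\overline{g^k}=e$, so the corresponding generator is exactly $g^{k-1}\cdot g\cdot e^{-1}=g^k$. As $g^k\notin S$ (the representative of the coset $H$ is $e\neq g^k$), this generator is nonidentity, so $g^k$ is a genuine member of the free basis $Y$ of $H$, and therefore $g^k$ is primitive in $H$.

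The one point needing real care is the passage between ``appears in $Y$'' and ``is a basis element'': one must check both that $\{e,\dots,g^{k-1}\}$ completes to a Schreier transversal and that $g^k$ survives as a nontrivial, non-redundant generator. The first is the subtree-to-spanning-tree argument above; the second is immediate once one observes that the pair $(g^{k-1},g)$ corresponds to the unique non-tree edge closing the path back to the root $H$, and distinct non-tree edges yield distinct free generators of $H$. I expect this bookkeeping (ensuring $g^k\neq1$ in $Y$ and that it is not identified with any other generator) to be the main obstacle, since the remainder is a direct reading-off from the rewriting process.
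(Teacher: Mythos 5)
Your proposal is correct and follows essentially the same route as the paper's proof: extend the prefix-closed partial transversal $\{e,g,\ldots,g^{k-1}\}$ to a full Schreier transversal, apply Reidemeister--Schreier with a basis of $G$ containing $g$, and read off $g^k$ as the nontrivial free generator arising from $s=g^{k-1}$, $x=g$. The paper's version is terser (it simply notes that $R=\emptyset$ forces $Y$ to be a free basis), but the content is identical.
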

\begin{proof}
Let $X$ be a free basis of $G$ in which $g$ appears. Then for
$0\leqs i<j<k$ we have $Hg^i\neq Hg^j$ so that $e,g,\ldots, g^{k-1}$ is a
partial Schreier transversal. On extending to a full Schreier transversal $S$,
we will find elements of our generating set for $H$ by taking the various
expressions $sx(\overline{sx})^{-1}$ which are not equal to the identity.
When we set $x=g$, we can also take $s=e,g,\ldots ,g^{k-2},g^{k-1}$ in turn.
Whereas $e,\ldots ,g^{k-2}$ all return $e$, for $s=g^{k-1}$ we find that
$g^k\neq e$ is an element of our generating set. To see that this
generating set $Y$ is a free
generating set, note that the original set of relators $R$ for $G$ is empty.
Thus the new set for $H$ using the generating set $Y$ is too.
\end{proof}

So now suppose we have $G$ a strictly ascending HNN extension of the free
group $F_n$ (where $n\geqs 2$) which contains some subgroup $BS(1,d)$ for
$|d|\geqs 2$. We said above that this gives rise to a
periodic conjugacy class, so that there is $i>0$ and
  $w,v\in F_n$ where $w$ is not the identity with $\theta^i(w)=vw^dv^{-1}$.
  Note that if $w=u^k$ is a power of some other element $u\in F$ then
  $(\theta^i(u))^k=(vu^dv^{-1})^k$ and $k$th roots are unique in a free
  group, so we can replace $w$ with $u$ and hence
  assume that $w$ is not a proper power.
  We can also tidy this up by first replacing $\theta$ with $\phi=\theta^i$
  which just changes $G$ to its finite cyclic cover of index $i$
  (which is still an ascending HNN extension).
  We can then replace $\phi$ with $\iota_{v^{-1}}\phi$ for $\iota_{v^{-1}}$
  the inner automorphism which is conjugation by $v^{-1}$ and which does
  not change the group. Renaming
  $\iota_{v^{-1}}\phi$ by $\theta$ and the cyclic cover of $G$ by
  $G$ also, we now have $\theta(w)=w^d$.

  We now show that when $|d|\geq 2$ (though not necessarily when $d=\pm 1$),
  this element $w$ is primitive in $F_n$. This result is in a paper \cite{lgv4}
  of Logan which uses the machinery of Mutanguha in \cite{mut}. Here we
  adapt the more basic idea from Lemma 3.4 in an earlier version \cite{lgv3}
  of Logan's paper.
  \begin{thm} \label{melg}
    Let $\theta: F_n\rightarrow F_n$ be an injective endomorphism of the free
    group $F_n$ for $n\geqs 2$ and let $w\in F_n\setminus\{e\}$ be such
    that $\theta(w)=w^d$ for $|d|\geqs 2$ but $w$ is not a proper power.
    Then $w$ is primitive in $F_n$.
  \end{thm}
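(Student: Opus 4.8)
The plan is to reduce the statement to a question about the image subgroup $H:=\theta(F_n)$ and then deploy the Reidemeister--Schreier machinery together with Lemma~\ref{prim}. Since $\theta$ is injective, it is an isomorphism from $F_n$ onto $H$, and $H$ is free of rank $n$. Because isomorphisms carry bases to bases, an element is primitive (respectively a proper power) in $H$ exactly when its $\theta$-preimage is primitive (respectively a proper power) in $F_n$. Applying this to $w\mapsto\theta(w)=w^d$, the hypothesis that $w$ is not a proper power in $F_n$ tells us that $w^d$ is \emph{not} a proper power in $H$, and the goal ``$w$ is primitive in $F_n$'' becomes the equivalent goal ``$w^d$ (equivalently $w^{|d|}$) is primitive in $H$''.

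First I would pin down how $\langle w\rangle$ meets $H$. The set of integers $m$ with $w^m\in H$ is a subgroup of $\Z$, nontrivial since $w^d\in H$, hence equal to $k\Z$ for the least $k>0$ with $w^k\in H$, and $k\mid d$. Writing $w^d=(w^k)^{d/k}$ exhibits $w^d$ as a power of the nontrivial element $w^k\in H$; were $|d/k|\geqs 2$ this would make $w^d$ a proper power in $H$, contradicting the previous paragraph. Hence $k=|d|$, so $w,w^2,\ldots,w^{|d|-1}\notin H$ while $w^{|d|}\in H$. In particular $e,w,\ldots,w^{|d|-1}$ represent distinct cosets and form a partial Schreier transversal for $H$ in $F_n$, exactly the hypothesis fed into the argument of Lemma~\ref{prim}.

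The crux is then to show $w^{|d|}$ is primitive in $H$. The approach, adapting Lemma~3.4 of \cite{lgv3}, is to extend $e,w,\ldots,w^{|d|-1}$ to a full Schreier transversal $S$ for $H$ in $F_n$ and read off the resulting free basis $Y$ of $H$; the element $w^{|d|}$ then appears as an explicit word in $Y$ arising from the ``wrap-around'' coset representative $w^{|d|-1}$. The difficulty---and the reason the single-letter calculation of Lemma~\ref{prim} does not transfer verbatim---is that $w$ is a general non-proper-power word rather than a basis letter, so $w^{|d|}$ is not visibly a single Reidemeister--Schreier generator. To finish, one must reconcile this \emph{external} basis $Y$ of $H$ (coming from its coset geometry inside $F_n$) with the \emph{intrinsic} basis $\theta(X)$ of $H$ (coming from the isomorphism $\theta$, under which $w^{|d|}=\theta(w)^{\pm1}$ is the image of the single generator $w$). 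I expect the main obstacle to be precisely this compatibility: extracting primitivity of $w$ from the interaction of the two basis descriptions, which forces one to use the endomorphism structure and the bound $|d|\geqs 2$ in an essential way---for instance via a Stallings-graph folding analysis of $\langle H,w\rangle$, or a descent on $n$ or on $|d|$---since being a non-proper-power is on its own insufficient for primitivity.
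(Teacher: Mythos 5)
Your setup is sound and two of your observations do appear (in slightly different clothing) in the paper's own proof: the equivalence ``$w$ primitive in $F_n$ $\Leftrightarrow$ $w^d$ primitive in $\theta(F_n)$'' via the isomorphism $\theta:F_n\to\theta(F_n)$, and the fact that the least $k>0$ with $w^k\in\theta(F_n)$ is exactly $|d|$ (the paper proves the same thing by noting that $\theta(u)=w^j$ with $0<j<|d|$ would force $u^{|d|}=w^{\pm j}$ and hence $w$ to be a proper power). But the argument stops exactly where the theorem actually lives. You correctly diagnose that Lemma~\ref{prim} cannot be applied with $G=F_n$, $g=w$, $H=\theta(F_n)$, because its hypothesis is that $g$ is \emph{already primitive} in the ambient group --- which is the conclusion you are after --- and you then only gesture at possible ways to close the circle (``Stallings-graph folding \ldots or a descent on $n$ or on $|d|$'') without carrying any of them out. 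That is a genuine gap, not a compatibility issue to be smoothed over: as you note yourself, ``not a proper power'' alone does not imply primitive (e.g.\ $[a,b]$ in $F_2$), so some input beyond the coset geometry of $\langle w\rangle$ in $\theta(F_n)$ is indispensable.

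The missing engine in the paper is G.\,Baumslag's theorem from \cite{bmrn}: if $w(x_1,\ldots,x_n)$ is neither a proper power nor primitive and the relation $w(g_1,\ldots,g_n)=g^k$ holds with $k\geqs 2$, then $\langle g_1,\ldots,g_n,g\rangle$ has rank strictly less than $n$. Applied with $g_i=\theta(x_i)$ and $g=w^{\pm1}$, this yields the dichotomy: either $w$ is primitive in $F_n$ (done), or $S:=\langle\theta(F_n),w\rangle$ has rank $<n$. In the latter case one observes $\theta(S)\leqs\theta(F_n)\leqs S$, so one may replace the vertex group $F_n$ by $S$ and repeat; the rank strictly decreases at each failed step and cannot drop below $2$, so after finitely many steps Baumslag's theorem delivers primitivity of $w$ in the terminal vertex group $S$. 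Only \emph{then} does Lemma~\ref{prim} enter, applied with $G=S$ (where $w$ is now known to be primitive) and $H=\theta(F)$, together with your $k=|d|$ computation, to push primitivity of $\theta(w)$ into $\theta(F)$ and hence, by injectivity of $\theta$, back to $w$ in $F$. Without Baumslag's rank-reduction statement and this descent, the proof does not go through.
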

  \begin{proof}
    An old and classic theorem of G.\,Baumslag in \cite{bmrn} is as follows.
    Say $g_1,\ldots ,g_n$ are elements of the free group $F$ on
    $x_1\ldots ,x_n$ for $n\geqs 2$
    and $w(x_1,\ldots ,x_n)$ is a word which is not a proper power and not
    primitive when regarded as an element of $F$.
    If we have $g_1,\ldots ,g_n,g\in F$ and
    $k\geq 2$ such that the relation $w(g_1,\ldots ,g_n)=g^k$ holds in $F$
    then the subgroup $S:=\langle g_1,\ldots ,g_n,g\rangle$, which must
    clearly have rank at most $n$, actually has rank less than $n$.

    Let us set $g_i=\theta(x_i)$ for $1\leqs i\leqs n$.
    We have a word $w\in F_n\setminus\{e\}$ with $\theta(w)=w^d$ so we can
    put $g=w(x_1,\ldots ,x_n)^{\pm 1}$ (according to whether $d$ is positive
    or negative) and $k=|d|$ to obtain
    \[\theta(w(x_1,\ldots ,x_n))=w(\theta(x_1),\ldots ,\theta(x_n))
      =w(g_1,\ldots ,g_n)=g^k.\]
    As we said that $w$ is not a proper power, we have that $w$ is primitive
    unless
    $\langle \theta(x_1),\ldots ,\theta(x_n),w\rangle$ has rank less than $n$.
    Now the subgroup $\langle \theta(x_1),\ldots ,\theta(x_n)\rangle$ of $F$ has
    rank $n$ because $\theta$ is injective. However it can happen that 
    throwing in $w$ can reduce the rank. But notice that if $n=2$
    then we would be done, because  $\langle \theta(x_1),\theta(x_2)\rangle$
    is non-abelian, so cannot be made cyclic by adding an extra generator.

    Therefore we invoke a rank reduction argument. The key here is that
    if we have the strictly ascending HNN extension $\langle t,F\rangle$
    where $F$ is finite rank free, defined by the endomorphism $\theta$,
    and $S$ is any finitely generated subgroup of $F$ (not necessarily
    of finite index) of rank $m$ say
    with $\theta(F)\leqs S\leqs F$ then
    $\langle t,F\rangle$ is also naturally the ascending HNN extension
    $\langle t,S \rangle$, because $\theta(S)\leqs \theta(F)\leqs S$, so
    we just take a free basis $y_1,\ldots ,y_m$ for $S$ and express each
    $\theta(y_i)$ in terms of this basis.

    Here we will set $S:=\langle \theta(F),w\rangle$ which is clearly
    finitely generated and contains $\theta(F)$. We saw above that if
    the rank of $S$ is equal to that of $F$ then we are done. But
    otherwise the rank is smaller, so we can replace $F$ with $S$ in
    the above and continue since $S$ also contains $w$,
    with the process terminating at some point
    since we cannot go below rank 2.

    However we are changing the vertex group of the extension each time.
    Suppose our rank does reduce when going from $F$ to $S$ but not at the
    next step. Then our application of Baumslag's result is telling us that
    $w$ is primitive in $S$, not necessarily $F$. But supposing that the
    former case holds (and it will hold after finitely many reductions),
        we are done if we can show that $\theta(w)$ is
    primitive in $\theta(F)$, as we then use injectivity of $\theta$.
    Now $\theta(w)=w^d\in\theta(F)$ and as $k=|d|$,
    we will be done by Lemma \ref{prim} on setting
    $G=S$, $g=w$ which is primitive in $G$
    and $H=\theta(F)$ provided $w,\ldots ,w^{k-1}\notin \theta(F)$. So suppose
    we have $u\in F$ such that $\theta(u)=w^j$ where $1\leqs j<k$. Then
    $\theta(u^k)=w^{jk}=\theta(w^{\pm j})$. By injectivity this would say that
    $u^k=w^{\pm j}$ with $u\notin \langle w\rangle$ so that $w$ is a proper
    power, a contradiction.
    \end{proof}
    
    \begin{thm} \label{main}
      Let $\theta:F_r\rightarrow F_r$ be an injective endomorphism
  of the free group $F_r$ where $r\geqs 2$ and let
  $G=\langle t,F_r\rangle$ be
  the corresponding HNN extension. If $BS(1,d)\leqs G$ for $|d|\geq 2$
  then there is
  a finite index subgroup $H$ of $G$ and a prime $p$ such that the rank
  $H_1(H,\F_p)$ is at least 3.
\end{thm}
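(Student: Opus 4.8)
The plan is to put $\theta$ into a normal form and then exhibit an explicit finite cover of $G$ whose mod $p$ first homology has rank $\geqs 3$; this is exactly the assertion, and since a finite index subgroup $H$ of $G$ with $H/H'$ of rank $\geqs 3$ also certifies virtually sufficient abelianisation for $G$, I may replace $G$ by a finite cyclic cover and post-compose $\theta$ with an inner automorphism without loss. As in the discussion preceding Theorem \ref{melg}, the hypothesis $BS(1,d)\leqs G$ yields a periodic conjugacy class, so after such a reduction I may assume $\theta(w)=w^d$ with $w$ not a proper power, and Theorem \ref{melg} then makes $w$ primitive. I therefore extend $w=x_1$ to a free basis $x_1,\dots,x_r$ of $F_r$ with $\theta(x_1)=x_1^d$. (If $\theta$ is onto then $G$ is free-by-cyclic and Theorem \ref{fbyz} already applies, so I assume the extension is strictly ascending.)

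For the cover I combine a cyclic cover in the $t$-direction with an abelian cover of the vertex group. A cyclic cover alone is insufficient: the abelianised map $M=\overline\theta$ on $\Z^r$ has the class of $x_1$ as an eigenvector of eigenvalue $d$, but its other eigenvalues (such as $0$) may never become roots of unity, so $H_1$ of the covers $\langle t^n,F_r\rangle$ can stay of rank $2$. Instead, fix $N\geqs 3$ and let $K\leqs F_r$ be a $\theta$-invariant finite index subgroup through which $F_r$ abelianises modulo $N$ — for instance the congruence subgroup $K=F_r'F_r^{N}$, which is characteristic and hence automatically $\theta$-invariant. Then $K$ is free, $\theta^k(K)\leqs K$, and $H:=\langle t^k,K\rangle$ is a finite index subgroup of $G$ that is again an ascending HNN extension of the free group $K$; by Lemma \ref{prim}, $u:=x_1^{N}$ is a primitive element of $K$ with $\theta^k(u)=u^{d^k}$, so the Baumslag--Solitar structure survives in $H$.

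Because $H$ is an ascending HNN extension with stable letter $t^k$ and defining monomorphism $\theta^k|_K$, its abelianisation is $H^{ab}\cong\Z\oplus\operatorname{coker}(1-\Theta)$, where $\Theta$ is induced by $\theta^k$ on $K^{ab}$. Hence $\operatorname{rank}H_1(H,\F_p)=1+\dim_{\F_p}\bigl(\operatorname{coker}(1-\Theta)\otimes\F_p\bigr)$, and it suffices to find $N$, $k$ and a prime $p$ for which this cokernel has rank at least $2$. Computing $\Theta$ on the Schreier generators of $K$ via Reidemeister--Schreier, the matrix of $\Theta$ splits according to the deck group $F_r/K$: the generator $u=x_1^{N}$ contributes a block on which $\Theta$ is multiplication by $d^k$, while the generators lifting $x_2,\dots,x_r$ are permuted by $F_r/K$ and contribute a further block. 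In the model case where some $\theta(x_j)$ is a conjugate of $x_1$, choosing $p\mid d$ (to kill the $x_1$-eigenvalue) and $N$ with $p\mid N-1$ turns this second block into $NI-J$ with $J$ all-ones, so that $1-\Theta$ has a cokernel of rank $N-1\geqs 2$ over $\F_p$; this is the mechanism producing the extra homology beyond the rank $2$ coming from the Baumslag--Solitar part.

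The hard part is to make this last step work for a general injective $\theta$, where $\theta(x_j)$ need not be conjugate to a power of $x_1$ and the relevant block is not the clean matrix $NI-J$. I expect the argument to require choosing the power $k$ so that enough eigenvalues of $\Theta$ on the (twisted) Alexander module of the $\Z/N$-cover become $N$-th roots of unity congruent to $1$ modulo a suitable prime $p$ — which, as the examples show, may be a divisor of $d$ or of some $d^k-1$ rather than a single uniform choice — and then showing that at least two independent classes survive in $\operatorname{coker}(1-\Theta)$: one forced by $\theta(x_1)=x_1^d$ and at least one more supplied by the abelian cover together with the primitivity of $x_1$. Pinning down this second surviving class by exploiting the freedom in $N$, $k$ and $p$ is where the primitivity established in Theorem \ref{melg} must be used decisively, and is the crux of the proof.
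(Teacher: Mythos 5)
Your reduction to the normal form $\theta(w)=w^d$ with $w$ primitive (via the periodic conjugacy class, a cyclic cover, an inner automorphism, and Theorem \ref{melg}) matches the paper exactly. But from that point on your argument has a genuine gap that you yourself flag: the entire content of the theorem is to produce a \emph{second} independent class in mod $p$ homology of some finite cover, and you leave precisely that step unproven, describing it as ``the crux'' and saying only what you ``expect'' the twisted Alexander module of the congruence cover $K=F_r'F_r^N$ to do. Your model computation (the block $NI-J$) assumes some $\theta(x_j)$ is conjugate to $x_1$, which is an extremely special case; for a general injective $\theta$ the action of $\Theta$ on $K^{ab}$ is governed by Fox derivatives of arbitrary words $\theta(x_j)$ and there is no reason for the eigenvalue analysis you sketch to go through. (There is also a local confusion: if you choose $p\mid d$ then $1-\Theta$ is invertible on the $x_1$-eigendirection, so the Baumslag--Solitar part contributes nothing to the cokernel; the useful primes are those dividing $d^k-1$, and you must separately handle $d=2$, where $d-1=1$ has no prime divisor --- the paper passes to the double cover to replace $d$ by $4$.)

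The idea you are missing is that one never needs to compute the homology of a specific abelian cover. Since every finite-index subgroup $H$ of index $i$ inherits, by Reidemeister--Schreier, a presentation with $ir+1$ generators and $ir$ relators, it suffices to exhibit \emph{two} relators in that presentation which die upon abelianising mod $p$. The paper does this by finding any proper finite-index subgroup $H$ containing both $t$ and $a=w$: then the relator $tat^{-1}a^{-d}$ survives verbatim in the rewritten presentation, and its conjugate by a length-one transversal element $b^{\pm1}$ rewrites as $t'a't'^{-1}a'^{-d}$, so both abelianise to multiples of $d-1$ and vanish mod any $p\mid d-1$, giving rank at least $(ir+1)-(ir-2)=3$. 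The existence of such an $H$ is then soft: $\langle t,a\rangle$ is a proper subgroup of $G$ (otherwise $G$ would be a quotient of $BS(1,d)$, hence metabelian, contradicting $F_r\leqs G$), and $G$ is residually finite by Borisov--Sapir, so some finite quotient is not generated by the images of $t$ and $a$, and its pullback is the required $H$. Your congruence-cover strategy might well be completable, but as written it replaces this one-paragraph counting argument with an unresolved spectral problem about $\operatorname{coker}(1-\Theta)$, so the proof is not complete.
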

\begin{proof}
  We have said that we can replace $G$ with a finite cyclic cover
and we can assume
  that we have $w\in F_r\setminus\{e\}$ with $\theta(w)=w^d$.
By Theorem \ref{melg}, $w$ is
actually a primitive element of our free group.
  Thus on taking a free basis $t,a,a_2,\ldots ,a_r$ for $F_r$ with $w=a$ as its
  first element, we obtain for $G$ the presentation
\[G=\langle t,a,a_2, \ldots ,a_r\,|\,tat^{-1}=a^d,
  ta_2t^{-1}=w_2,\ldots ,ta_rt^{-1}=
  w_r\rangle\]
where $|d|\geqs 2$ and $w_2,\ldots ,w_r$ are elements of $F_r$ that will not
particularly concern us.
The abelianisation of $G$ maps onto $\Z$ by sending
$t$ to 1 and all other generators to zero. Moreover by abelianising
the first relation, we obtain $(d-1)\overline{a}=0$ and this vanishes
when we work modulo any prime $p$ dividing $d-1$. If $d=2$ then there are
no such $p$ but we can take the double cyclic cover and on replacing
$t^2$ with $t$, our first relation is now $tat^{-1}=a^4$, hence we can
assume in the above that $d\neq 2$ so such a prime $p$ exists.

Consequently
we have a deficiency 1 presentation for $G$ and a prime $p$ so that, when we
abelianise and reduce modulo $p$, our first relation now vanishes. Hence
we now have $n+1$ variables and $n-1$ equations in $\F_p$ so
$H_1(G,\F_p)$ has rank at least 2. Now given any
finite index subgroup $H$ (say having index $i$)
of $G$, we can obtain a
presentation of $H$ by the Reidemeister - Schreier rewriting process
described above. In this case
$X=\{t,a,a_2,\ldots ,a_r\}$ is our generating set for $G$
and $|R|=r$, thus resulting in a
presentation for $H$ with $ir+1$ generators (throwing away the $i-1$
occurrences of the identity in this process) and $ir$ relators.

This works for any finite index subgroup $H$ of $G$.
The idea now is that if we can find a particular finite index
subgroup $H$ with index $i>1$ and such that
$t,a\in H$ (so in particular $t,a$ will not generate
$G$) then
$x=t$ and $x=a$ are part of our generating set $X$ for $G$. On taking
the identity
$e$ for our element $s$ in our Schreier transversal, we obtain
elements $sx(\overline{sx})^{-1}$ as two generators for $H$, which will be
$t$ and $a$. But there must exist some other generator from
$a_2,\ldots ,a_r$, say $b$, with $b^{\pm 1}$ in $S$ 
because $S$ must contain a length
one element (or $S=\{e\}$ but that is when $H=G$). On now taking
$x=t$ and $x=a$ again but $s=b^{\pm 1}$, we find that
$sx(\overline{sx})^{-1}$ is equal to $t':=b^{\pm 1}tb^{\mp 1}$ and
$a':=b^{\pm 1} ab^{\mp 1}$.
Having obtained all of the $ir+1$ generators for $H$ including these four, we
form the $ir$ relators for $H$ by rewriting the original relators and their
conjugates by elements of $S$ in terms of our new generating set $Y$
for $H$. But our first relator $tat^{-1}a^{-d}$ stays as it is because
$t,a\in Y$. Since $b^{\pm 1}\in S$,
we next conjugate this relator by $b^{\pm 1}$ to obtain
$b^{\pm 1}tat^{-1}a^{-d}b^{\mp 1}$
which will be rewritten as $t'a't'^{-1}a'^{-d}$. It does not matter about
the other relators, because when we now abelianise our deficiency 1
presentation for $H$, we have two relators that disappear modulo $p$
for any prime $p$ dividing $d-1$. Consequently we have that the rank
of $H_1(H,\F_p)$ is at least 3.

We now have to ensure that such an $H$ exists. It certainly will not
if $t,a$ generate $G$. But as we have the relation $tat^{-1}=a^d$
between them, if they did then $G$ would be a quotient of $BS(1,d)$
and so would be metabelian, which is a contradiction as $G$ contains
$F_r$ for $r\geq 2$. So we can hope that there is a finite quotient $G/N$
of $G$ which is not generated by the images $tN$ and $aN$ in $G/N$. Then
$\langle tN,aN\rangle$ will be a proper subgroup of $G/N$ and by
correspondence this will take the form $H/N$, where
$H$ will be a proper finite index subgroup containing $t$ and $a$ so
that we would be done. To obtain such a finite quotient, recall by
\cite{bs} that $G$ is residually finite. Also $G$ is not metabelian, so
that we can take $\gamma \in G''\setminus \{e\}$ and a finite quotient
$Q$ of $G$ in
which $\gamma$ survives. Thus $Q''\neq \{e\}$, but if $Q$ were generated
by the images of $t$ and $a$ then it would be a quotient of
$\langle t,a\rangle$ which is metabelian, so $Q$ would be too and thus
$Q''=\{e\}$.
\end{proof}

We mentioned at the end of Section 3 that \cite{lac} Theorem 1.5 (ii) tells
us that a group with a deficiency 1 presentation having virtually
sufficient homology must have a lower bound for
subgroup growth which is not far off exponential
and in that paper the argument we used in Section 4 is invoked to obtain
this conclusion for all (non-abelian free) by cyclic groups. However if a
finitely generated group is large then it has the highest possible subgroup
growth which is of type $n^n$. Certainly there are deficiency 1 groups which
are not residually finite and which have very small (eg linear) subgroup
growth, as well as the polynomial subgroup growth of the residually
finite groups $BS(1,n)$. As for strictly ascending HNN extensions of finite
rank non-abelian free groups $G$, we have type $n^n$ (via largeness)
for hyperbolic groups with $\theta$ irreducible as well as when $G$
contains $BS(1,\pm 1)$. Theorem \ref{main} above now tells us that we
have at least
near to exponential growth when $G$ contains $BS(1,d)$. Oddly it
is just hyperbolic groups with $\theta$ reducible amongst all strictly
ascending HNN extensions of finite rank non-abelian free groups $G$
where we do not seem to have a sensible lower bound for subgroup growth.

\section{Bounded Generation}

We now mention some applications of our results from the previous sections.
The first is to bounded generation.

\begin{defn} \label{bgdf}
  A group $G$ is {\bf boundedly generated}
  if there exists a finite (generating) set
  $c_1,\ldots ,c_n\in G$ and $N\in\N$ such that
  every $g\in G$ can be expressed as
  \[g=c_{i_1}^{m_1}\ldots c_{i_N}^{m_N}\mbox{ where }
    1\leqs i_1,\ldots ,i_N\leqs n\mbox{ and } m_1,\ldots ,m_N\in\Z.\]
\end{defn}
Another frequently used definition of bounded generation is that $G$
is equal to the product $C_1\ldots C_M$ where
$C_1,\ldots ,C_M$ are cyclic subgroups of $G$ (possibly with repeats).
This clearly implies Definition \ref{bgdf}. For the other way round, note
that if $G$ is boundedly generated by $c_1,\ldots ,c_n$ with word length
$N$ then we can
express $G$ as the product of cyclic groups as long as this product
contains every length $N$ word in $\{1,\ldots ,n\}$. For instance,
if $G$ is boundedly generated by $a,b$ with $N=3$ in Definition \ref{bgdf}
then for
$A=\langle a\rangle$, $B=\langle b\rangle$ we would have
$G=AAABABBBAA$, so that in general we should expect $M$ to be much larger
than $N$.

Obvious examples of finitely generated groups which are boundedly generated
comprise finite groups (but no other torsion groups as there will only be
finitely many possibilities for $g$ in Definition \ref{bgdf}), alongside
abelian groups.
It is also the case that if $G$ is boundedly generated then so are the
finite index subgroups and supergroups of $G$. Moreover bounded generation
is preserved by taking quotients and indeed extensions (meaning that
if $N\unlhds G$ with $N$ and $G/N$ both boundedly generated then so is $G$).
So for instance this means that all virtually polycyclic groups are
boundedly generated.

Slightly less obvious examples can be formed in the
following way. If $G=C_1\ldots C_M$ is boundedly generated and
$\theta:G\rightarrow G$ is an injective (but not necessarily surjective)
endomorphism then on forming the ascending HNN extension
\[\Gamma=\langle G,t\,|\,tgt^{-1}=\theta(g)\mbox{ for }g\in G\rangle,\]
we can move $t$ to the right of any $g$ and $t^{-1}$ to the left,
so that any element in $\Gamma$ can be written as $t^{-p}gt^q$
for $g\in G$ and $p,q\geqs 0$. Hence if $T=\langle t\rangle$ then
$\Gamma=TC_1\ldots C_MT$ is also boundedly generated. In particular
$BS(1,n)$ is boundedly generated. A much less obvious example
is given in the paper \cite{mur} of Muranov. This gives an example of a
finitely generated (though not finitely presented) group which contains
$F_2$ and is a direct limit of a sequence of hyperbolic groups with
respect to a family of surjective homomorphisms.

The other main source of boundedly generated groups are nearly all
$S$-arithmetic subgroups of linear algebraic groups, in particular
$SL(n,\Z)$ is boundedly generated for $n\geq 3$ (but not $n=2$) using
the elementary matrices. To put it loosely, away from small groups
we have that bounded generation is a higher rank phenomenon. Thus
in looking now
for groups which are not boundedly generated, it should be the case that
either of our two rank 1 phenomena are enough to establish this. Indeed
this is well known, as we now summarise:
\begin{prop} \label{ntbg}
  Let $G$ be a finitely generated group.\\
  (i) If $G$ has an infinite dimensional
  space $Q_h(G)$ of homogeneous quasimorphisms then $G$ is not
  boundedly generated. If $G$ is finitely presented and has infinite
  dimensional second bounded cohomology then $G$ is not boundedly
  generated.\\
  (ii) If $G$ is a finitely generated group having a finite index subgroup
  $H$ whose pro-$p$ completion $H_{\hat{p}}$ for some prime $p$ is not
  $p$-adic analytic then $G$ is not boundedly generated.
\end{prop}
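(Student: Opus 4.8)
The plan is to treat the two parts separately, reducing the first to a dimension count for quasimorphisms and the second to the behaviour of pro-$p$ completions under bounded generation.

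For the first sentence of (i), I would exploit the equivalent formulation recorded above: if $G$ is boundedly generated then $G=C_1\cdots C_M$ for finitely many cyclic subgroups $C_i=\langle c_i\rangle$. Given a homogeneous quasimorphism $q\colon G\rightarrow\R$ with defect $D(q)=\sup_{g,h}|q(gh)-q(g)-q(h)|$, every element can be written as $g=c_1^{m_1}\cdots c_M^{m_M}$, so iterating the defect bound gives $|q(g)-\sum_{i=1}^M q(c_i^{m_i})|\leqs (M-1)D(q)$, and homogeneity turns each summand into $q(c_i^{m_i})=m_i q(c_i)$. I would then consider the linear evaluation map $Q_h(G)\rightarrow\R^M$, $q\mapsto(q(c_1),\ldots,q(c_M))$. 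If $q$ lies in its kernel then the displayed inequality forces $|q(g)|\leqs (M-1)D(q)$ for all $g$; applying this to $g^n$ and using $q(g^n)=nq(g)$ forces $q(g)=0$. Hence the evaluation map is injective and $\dim Q_h(G)\leqs M<\infty$, contradicting the hypothesis. The argument is elementary; the only point needing care is that it uses the product-of-cyclics form rather than Definition \ref{bgdf} directly, but the equivalence of the two was noted above.

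The second sentence of (i) I would reduce to the first. When $G$ is finitely presented the ordinary cohomology $H^2(G;\R)$ is finite dimensional, so from the identification of the kernel $EH^2_b(G;\R)$ of the comparison map with $Q_h(G)/H^1(G;\R)$ (recalled in Section 2), having $H^2_b(G;\R)$ infinite dimensional forces $EH^2_b(G;\R)$, and hence $Q_h(G)$, to be infinite dimensional. The previous paragraph then applies.

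For (ii) I would argue by contraposition and pass to pro-$p$ completions. Suppose $G$ is boundedly generated; since bounded generation is inherited by finite index subgroups (noted above), the given $H$ is boundedly generated, say $H=C_1\cdots C_M$ with $C_i=\langle c_i\rangle$. By density of $H$ in $H_{\hat p}$ and continuity of multiplication, $H_{\hat p}=\overline{C_1}\cdots\overline{C_M}$ is a product of finitely many procyclic pro-$p$ subgroups, each a continuous image of $\Z_p$. The key step, which I expect to be the main obstacle, is the classical fact that such a boundedly generated pro-$p$ group has finite Pr\"ufer rank and is therefore $p$-adic analytic; this is exactly the mechanism behind the small pro-$p$ completions of $SL(n,\Z)$ mentioned in the introduction. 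The easy half is already visible: every finite quotient of $H_{\hat p}$ is a product of $M$ cyclic $p$-groups, so a Frattini argument bounds its minimal number of generators by $M$, giving $d(H_{\hat p})\leqs M$. Upgrading this to a uniform bound on the number of generators of every open subgroup --- equivalently finite rank, equivalently $p$-adic analyticity by the characterisation used in the proof of Theorem \ref{padan} --- is the substantive content, and I would invoke the bounded generation and subgroup growth literature for it. Since this contradicts the hypothesis that $H_{\hat p}$ is not $p$-adic analytic, $G$ cannot be boundedly generated.
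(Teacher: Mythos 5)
Your proposal is correct and follows essentially the same route as the paper: for (i) an evaluation map on the (bounded) generators whose kernel consists of bounded, hence zero, homogeneous quasimorphisms, with the finitely presented case reduced to this via finite-dimensionality of $H^2(G;\R)$; for (ii) passing to the finite index subgroup, noting its pro-$p$ completion is boundedly generated topologically, and citing the standard fact (in the paper, \cite{lssg} Corollary 12.6.2) that boundedly generated pro-$p$ groups are $p$-adic analytic. The only cosmetic difference is that you work with the product-of-cyclics formulation while the paper argues directly from Definition \ref{bgdf}.
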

\begin{proof}
  The point in (i) is that if $G$ is boundedly generated by $g_1,\ldots ,g_n$
then any homogeneous quasimorphism which is zero on all of
$g_1,\ldots ,g_n$ must be bounded on $G$ and hence
identically zero, by homogeneity. But if we have
$n+1$ linearly independent homogeneous quasimorphisms then we can solve
$n$ linear equations to find a non-trivial homogeneous quasimorphism which
is zero at $g_1,\ldots ,g_n$. Then
$G$ being finitely presented implies that $H^2(G;\R)$ is finite dimensional,
so $H^2_b(G;\R)$ being infinite dimensional is equivalent to $Q_h(G)$
being infinite dimensional.

For (ii), if $G$ were boundedly generated then $H$ would be too, but
the pro-$p$ completion $H_{\hat{p}}$ of a boundedly generated abstract group
$H$ must also be boundedly generated (where we now mean topological
generation). But a boundedly generated pro-$p$ group must be $p$-adic
analytic, say by \cite{lssg} Corollary 12.6.2. Indeed it is further shown
in \cite{pyb} that a boundedly generated abstract group has subgroup
growth of type bounded above by $n^{\log{n}}$.
\end{proof}

We can now apply all of this to groups with a deficiency 1 presentation.
\begin{thm} \label{nbgd1}
  If $G$ is a group defined by a presentation of deficiency 1 then either
  $G$ is isomorphic to $BS(1,n)$ for $n\in\Z$ or
  $G$ is not boundedly generated or $G$ is a strictly ascending HNN extension
  of the finitely generated group $H$ where $H$ is not $FP_2$. Conjecturally
  this last case does not occur.
\end{thm}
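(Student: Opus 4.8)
The plan is to reduce everything, via the two rank 1 phenomena already developed, to the case of a free base group, and then to read off the three alternatives. First I would note that a deficiency 1 presentation forces $b_1(G)\geqs 1$, so $G$ surjects onto $\Z$ and Corollary \ref{toz} is available. By Proposition \ref{ntbg} (i) it suffices to treat the case where $H^2_b(G;\R)$ is finite dimensional, since otherwise $G$ is already not boundedly generated and we are in the second alternative. In that finite dimensional case Corollary \ref{toz} tells us that $G$ is an ascending HNN extension $\langle t,H\rangle$ of a finitely generated group $H$, and I would split according to whether this extension is strict.

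When the extension is not strict we have $G=H\rtimes\Z$, so by the result of \cite{dss} the base $H$ is a free group $F_r$. For $r\geqs 2$, Theorem \ref{fbyz} gives infinite dimensional second bounded cohomology, so $G$ is not boundedly generated by Proposition \ref{ntbg} (i); for $r=0$ we have $G=\Z=BS(1,0)$, and for $r=1$ the only two automorphisms of $\Z$ give $G=\Z\times\Z=BS(1,1)$ or the Klein bottle group $=BS(1,-1)$. Thus the non-strict case always yields the first or the second alternative.

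Now suppose the extension is strict. If $H$ is not $FP_2$ we are immediately in the third alternative. Otherwise $H$ is $FP_2$, and the key step is to show that $H$ must then be free, say $H=F_r$; granting this, strictness forces $r\geqs 1$. If $r=1$ then $G=BS(1,d)$ with $|d|\geqs 2$, the first alternative. If $r\geqs 2$ then $G$ is a strictly ascending HNN extension of $F_r$ and I would show it is not boundedly generated via the dichotomy of Section 5. If $G$ has no Baumslag - Solitar subgroup then by \cite{mut} it is word hyperbolic, hence, containing $F_2$, has infinite dimensional second bounded cohomology and so fails bounded generation by Proposition \ref{ntbg} (i). If $G$ contains $BS(1,\pm 1)$ then by \cite{med1} it is large, so a finite index subgroup surjects onto $F_2$ and is not boundedly generated, whence neither is $G$. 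Finally if $G$ contains $BS(1,d)$ with $|d|\geqs 2$ then Theorems \ref{melg} and \ref{main} produce a finite index subgroup with $H_1(\cdot,\F_p)$ of rank at least $3$, so condition (ii) of Theorem \ref{padan} holds and Proposition \ref{ntbg} (ii) applies.

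The main obstacle is the reduction in the strict $FP_2$ case to a free base. I would attack it homologically: a deficiency 1 presentation $2$-complex has Euler characteristic $0$, so $\mathrm{rank}_\Q H_2(G;\Q)\leqs b_1(G)-1$, while the Wang sequence for the ascending extension expresses $H_1(G;\Q)$ and $H_2(G;\Q)$ through the operator $1-\theta_*$ on $H_*(H;\Q)$. Comparing ranks already pins down $\theta_*$ on $H_2(H;\Q)$, and the aim would be to upgrade this, using strictness of $\theta$ together with the $FP_2$ hypothesis, to $\mathrm{cd}(H)\leqs 1$ and hence to freeness of $H$ by Stallings - Swan; alternatively this reduction can be quoted from the analysis underlying the conjecture of \cite{bi}. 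This is precisely the point at which the possibility of a non-$FP_2$ base cannot be ruled out, which is why the third alternative survives and is only conjecturally empty.
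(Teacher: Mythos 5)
Your proposal is correct and follows essentially the same route as the paper: reduce via Corollary \ref{toz} to an ascending HNN extension of a finitely generated group, handle the non-strict case by \cite{dss} and Theorem \ref{fbyz}, quote \cite{bi} for the $FP_2\Rightarrow$ free step (the paper does exactly this rather than reproving it), and then run the hyperbolic / $BS(1,\pm 1)$ / $BS(1,d)$ trichotomy from \cite{mut}, \cite{med1} and Theorem \ref{main}. The only divergence is your speculative homological sketch for the $FP_2$ reduction, which is not needed since you (like the paper) fall back on the citation.
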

\begin{proof}
  There will exist a homomorphism from $G$ onto $\Z$
  which means we can express $G$ as an HNN extension $A*_\phi$.
  We saw in Corollary
  \ref{toz} that we can assume the vertex group $A$ and edge group
  $C$ are finitely generated. If neither $C$ nor $\phi(C)$ are equal
  to $A$ then that corollary tells us we are done by second bounded
  cohomology. If $C=\phi(C)=A$ then $G=A\rtimes\Z$ and we mentioned in
  Section 4 that $K$ will be free, so we are done by Theorem \ref{fbyz}
  using virtually sufficient abelianisation (or by second bounded
  cohomology but virtually sufficient abelianisation is much more
  elementary here). The only exceptions are when $G=\Z$ (which we can
  interpret as $BS(1,0)$), $\Z\times\Z$ or the Klein bottle group
  which are all boundedly generated.

  Thus we now know that $C$ is equal to $A$ but $\phi(C)\neq A$ (or
  vice versa but then just swap $\phi(C)$ and $C$).
  so that $G$ is a strictly ascending HNN extension of the finitely
  generated group $A$. If $A$ is of type $FP_2$ then it is known by
  \cite{bi} that $A$ must be free and it is conjectured at the
  end of Section 2 in this paper that this is always the case.

  Hence we are left with $G$ being a strictly ascending HNN extension
  of a finite rank free group. If the rank is
  one then $G=BS(1,n)$ which
  is boundedly generated. If the rank is at least two then
  we are done by second bounded cohomology
  if $G$ is hyperbolic. Otherwise by \cite{mut} $G$ must contain a
  periodic conjugacy class and if this is $\Z\times\Z$ then we are
  done by \cite{med1} using largeness (which can be thought of as
  either using second bounded cohomology on dropping down to a finite
  index subgroup or using virtually sufficient abelianisation).

  So the only case left is when $G$ has a periodic conjugacy class
  giving rise to $BS(1,n)$ for $|n|\geqs 2$, whereupon we are done by 
  Theorem \ref{main} using virtually sufficient abelianisation.
\end{proof}

Looking back at Proposition \ref{ntbg}, we note here that neither
converse is true. For this, we first define the notion of polynomial
index growth. If $G$ is a finitely generated (abstract, profinite
or pro-$p$) group then we can define the index growth function $f_G(n)$
by the supremum of $|Q/Q^{(n)}|$ where $Q$ ranges over all finite quotients
of $G$ and $Q^{(n)}$ is the normal subgroup of $Q$ generated by the $n$th
powers of elements of $Q$. By Zelmanov's resolution of the restricted
Burnside conjecture, $f_G(n)$ is finite and we say that $G$ has
{\bf polynomial index growth} if there exists $C\geq 0$ such that
$f_G(n)\leqs n^C$. Now having bounded generation (in either the abstract,
profinite or pro-$p$ sense) restricts the size of $|Q/Q^{(n)}|$ to being
polynomial in $n$ and so $f_G(n)$ is polynomial in $n$.

For a counterexample to the converse of
(a), we can take any finitely generated residually
finite group $G$ that is virtually soluble but which does not have
finite Pr\"ufer rank (and finitely presented examples exist too).
Then $G$ is amenable thus
has no bounded cohomology but \cite{pysg} shows that $G$ does not have
polynomial index growth, so is not boundedly generated. For (b), examples
of finitely generated groups $G$ (linear in characteristic zero) where
$G$ is not boundedly generated but the profinite completion $\hat{G}$
is were given in \cite{invt}. Thus $\hat{G}$ has polynomial index growth
and so does $G$, because the finite quotients of $\hat{G}$ are the same
as those for $G$. But for any prime $p$ the finite quotients of
$G_{\hat{p}}$ are a subset of this, so $G_{\hat{p}}$ also has polynomial
index growth. Now in the world of pro-$p$ groups, having finite Pr\"ufer
rank, being boundedly generated and having polynomial index growth are
the same thing (see \cite{lssg} Chapter 12 and references within). Thus
$G_{\hat{p}}$ is $p$-adic analytic. We can now repeat with $H$ any finite
index subgroup of $H$, so that $\hat{H}$ will be a finite index subgroup
of $\hat{G}$ and thus will also be boundedly  generated.

We can adapt the heavy machinery used here to give an alternative
argument for Theorem \ref{nbgd1} in the case of strictly ascending HNN
extensions $G$ of finitely generated free groups. It was shown in \cite{brsp2}
as a followup to \cite{bs} that $G$ is virtually residually (finite $p$)
for all but finitely many primes $p$. Now suppose that $G$ is boundedly
generated so that $G_{\hat{p}}$ is also boundedly generated and hence
  is $p$-adic analytic.
  Again by Chapter 12 of \cite{lssg} and related references, this implies that
  $G$ is linear in characteristic zero which in turn implies that
  the rank of $H_1(H,\F_p)$ is unbounded over the finite index subgroups
  $H$ of $G$, so we are done by Theorem \ref{padan}. Note though that
  this does not establish virtually sufficient abelianisation for $G$
  as was achieved in Theorem \ref{main}.
  Rather it says that if $G$ is boundedly
  generated then $G$ both does and does not have virtually sufficient
  abelianisation.
  
  \section{1-relator groups}

  We finish by specialising to 1-relator groups where we would hope that the
  results can be strengthened. We are able to do this for Theorem
  \ref{nbgd1}.

\begin{thm} \label{rel1}
  If $G$ is a finitely generated group defined by a
  1-relator presentation then either
  $G$ is cyclic, is
  isomorphic to $BS(1,n)$ for $n\in\Z\setminus\{0\}$ or
  $G$ has infinite dimensional second bounded cohomology
  or $G$ has a finite index subgroup $H$ where
  $H_{\hat{p}}$ is not $p$-adic analytic for some prime $p$.
\end{thm}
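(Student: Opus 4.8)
The plan is to re-run the argument of Theorem \ref{nbgd1} while keeping track of \emph{which} of the two rank 1 phenomena we land in, the one genuinely new ingredient being that for a 1-relator group the conjectural exceptional case (a strictly ascending HNN extension of a non-$FP_2$ base) provably cannot occur. First I would dispose of the trivial ranges. If $G$ needs only one generator then $G$ is cyclic and we are in the first alternative. If $G$ has a 1-relator presentation on $n\geqs 3$ generators then its deficiency is $n-1\geqs 2$, so by the final sentence of Theorem \ref{padan} some finite index subgroup has a non-$p$-adic-analytic pro-$p$ completion (and in fact $G$ also has infinite dimensional second bounded cohomology), placing us in one of the last two alternatives. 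This reduces everything to the case $n=2$, a 1-relator group of deficiency exactly $1$, which is necessarily infinite since a deficiency $1$ presentation cannot define a finite group.

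In this case $G=\langle a,b\,|\,r\rangle$ has $b_1(G)\geqs 1$, so I fix a surjection $\chi:G\to\Z$. Composing with the abelianisation gives a surjection $F_2\to\Z$ onto a primitive vector, so after an automorphism of $F_2$ I may assume $\chi$ sends one generator, say $t$, to $1$ and the other, $a$, to $0$; then $t$ has exponent sum zero in the relator. The Magnus--Moldavanskii rewriting now presents $G$ as an HNN extension with stable letter $t$, finitely generated one-relator vertex group $A=\langle a_{\mu},\ldots,a_{M}\,|\,r'\rangle$ (where $a_k=t^kat^{-k}$ and $r'$ is $r$ rewritten so as to involve both $a_\mu$ and $a_M$), and edge groups $P=\langle a_{\mu},\ldots,a_{M-1}\rangle$, $Q=\langle a_{\mu+1},\ldots,a_{M}\rangle$ with $tPt^{-1}=Q$. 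The crucial point is that by the Freiheitssatz (\cite{ls}) both $P$ and $Q$ are \emph{free}, since each omits a generator occurring in $r'$.

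I would then split on the inclusions $P\hookrightarrow A$ and $Q\hookrightarrow A$. If neither $P$ nor $Q$ equals $A$ then both edge groups are proper and Theorem \ref{kj2hnn} gives $G$ infinite dimensional second bounded cohomology (third alternative). Otherwise one edge group equals $A$, and since that edge group is free we conclude that \emph{the vertex group $A$ is free}; this is exactly the step that excludes the exceptional case of Theorem \ref{nbgd1} and makes the \cite{bi} conjecture unnecessary here. Writing $A=F_r$: if $r=0$ then $G=\Z$ is cyclic, and if $r=1$ then $G=BS(1,n)$ for some $n\neq 0$ (including $\Z\times\Z=BS(1,1)$ and the Klein bottle $BS(1,-1)$), giving the first two alternatives. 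If $r\geqs 2$ then $G$ is a (possibly strict) ascending HNN extension of $F_r$ and I invoke the earlier sections exactly as in Theorem \ref{nbgd1}: when $P=Q=A$ we have $G=F_r\rtimes\Z$ and Theorem \ref{fbyz} yields virtually sufficient abelianisation; when the extension is strict and $G$ is hyperbolic it is non-elementary (it contains $F_r$) and so has infinite dimensional second bounded cohomology; and when it is strict but not hyperbolic \cite{mut} supplies a periodic conjugacy class, whence either \cite{med1} (the $BS(1,\pm1)$ case, via largeness) or Theorem \ref{main} (the $BS(1,d)$ case for $|d|\geqs 2$) produces a finite index subgroup $H$ with $H_1(H,\F_p)$ of rank at least $3$, so $H_{\hat{p}}$ is not $p$-adic analytic by Theorem \ref{padan}.

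Apart from this bookkeeping, the one substantive addition beyond Theorem \ref{nbgd1} is the freeness of the vertex group, and I expect the main obstacle to be cleanly justifying that the Magnus--Moldavanskii decomposition can be arranged for the chosen $\chi$ with finitely generated $A$ and with $P,Q$ as described. In particular I would need to dispatch the degenerate possibility that $r'$ involves a single $a_k$ (where $G$ is non-elementary virtually free, so has infinite dimensional second bounded cohomology) and the torsion case $r=s^k$ with $k\geqs 2$ (where $A$ inherits torsion and so cannot coincide with the free groups $P$ or $Q$, forcing the proper-edge-group case, or alternatively one simply notes $G$ is then non-elementary hyperbolic by B.\,B.\,Newman). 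Once the splitting is in hand the Freiheitssatz does all the work: it is precisely the tool unavailable for a general deficiency $1$ group, and it is what upgrades Theorem \ref{nbgd1} to the unconditional dichotomy stated here.
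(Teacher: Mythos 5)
Your proposal is correct and follows essentially the same route as the paper: reduce to the 2-generator case, write $G$ as an HNN extension with finitely generated vertex and edge groups, and use the Freiheitssatz to conclude the edge groups (hence, in the ascending case, the vertex group) are free, which eliminates the conjectural exception of Theorem \ref{nbgd1}. Your explicit Magnus--Moldavanskii rewriting and the extra degenerate/torsion bookkeeping are just a more detailed account of the folding the paper invokes via Corollary \ref{toz}.
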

\begin{proof}
  If the presentation for $G$ has three or more generators then we are
  back in the deficiency at least two case, where we know both of
  the latter options are true. If there is one generator then $G$ is
  cyclic. Otherwise we have a 2-generator 1-relator
  presentation and so we can follow the proof of Corollary \ref{toz}.
  However when we write $G$ as an HNN extension with vertex
  group $A$ and edge groups $C,\phi(C)$ which are all finitely generated,
  the Freiheitssatz for 1-relator groups tells us that $C$ and $\phi(C)$
  are free, so that if the HNN extension is ascending then $A$ is free.
\end{proof}

This gives by \ref{ntbg} a result that we have not seen in the literature.
\begin{cor} The boundedly generated 1-relator groups are precisely
  the cyclic groups and the soluble Baumslag-Solitar groups
  $BS(1,n)$.
\end{cor}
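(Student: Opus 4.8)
The plan is to deduce the corollary immediately by combining Theorem \ref{rel1} with Proposition \ref{ntbg}, so that the only real content is checking that the two inputs line up; the argument is organisational rather than technical. For the inclusion that every boundedly generated 1-relator group lies in the stated list, I would first note that a 1-relator presentation is finite, so such a $G$ is finitely presented and in particular finitely generated, and hence Theorem \ref{rel1} applies and places $G$ into one of its four cases. I would then rule out the last two cases using Proposition \ref{ntbg}: if $G$ had infinite dimensional second bounded cohomology, part (i) (invoking finite presentability) would contradict bounded generation, while if $G$ had a finite index subgroup $H$ with $H_{\hat{p}}$ not $p$-adic analytic for some prime $p$, part (ii) would do the same. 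This leaves only the two alternatives that $G$ is cyclic or isomorphic to $BS(1,n)$.

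For the reverse inclusion I would verify that each group named in the conclusion is a boundedly generated 1-relator group. Every cyclic group admits a 1-relator presentation ($\Z/m=\langle a\mid a^m\rangle$, and $\Z=\langle a,b\mid b\rangle$) and is boundedly generated, being abelian. Each $BS(1,n)=\langle a,t\mid tat^{-1}=a^n\rangle$ is manifestly a 1-relator group, and Section 6 already records that it is boundedly generated: for $n\neq 0$ it is the ascending HNN extension of the boundedly generated group $\Z$ under the injective endomorphism $a\mapsto a^n$, and the argument there rewriting every element as $t^{-p}a^m t^q$ exhibits $BS(1,n)$ as a finite product of cyclic subgroups.

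I do not anticipate any genuine obstacle, since the two hard ingredients have already been established; the only care needed is bookkeeping around the degenerate Baumslag--Solitar groups. One should observe that $BS(1,0)\cong\Z$ and $BS(1,1)\cong\Z\times\Z$ collapse into the cyclic or abelian cases, while $BS(1,-1)$ is the (polycyclic, hence boundedly generated) Klein bottle group, so that the two named families jointly coincide with the permitted outcomes of Theorem \ref{rel1} and bounded generation of $BS(1,n)$ holds for every $n$, not merely for $|n|\geqs 2$. With this checked, the equivalence follows at once.
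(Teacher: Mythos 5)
Your proposal is correct and is exactly the argument the paper intends: the corollary is stated there as an immediate consequence of Theorem \ref{rel1} combined with Proposition \ref{ntbg}, with the converse direction supplied by the observations in Section 6 that cyclic groups and the ascending HNN extensions $BS(1,n)$ of $\Z$ are boundedly generated. Your extra bookkeeping on the degenerate cases $BS(1,0)$, $BS(1,\pm 1)$ is sensible but does not change the route.
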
  

Having seen that all 1-relator groups apart from the ``small'' cases
have at least one of our two rank 1 phenomena, we might want to see
if this still holds on strengthening the first phenomenon from having
infinite dimensional second bounded cohomology to being acylindrically
hyperbolic. A little thought reveals that Baumslag - Solitar groups
of the form $BS(m,n)$ where $m$ and $n$ are coprime will also be
excluded as we know they are not acylindrically hyperbolic and they do
not have virtually sufficient homology. However surprisingly there
are other exceptions.

For this we can use:
\begin{prop} \label{1rel2gen} (\cite{motr} Proposition 4.20) 
Let $G$ be a group with two generators and one defining relator. Then at 
least one of the following holds:
\begin{itemize}
  \item[(i)] $G$ is acylindrically hyperbolic;
  \item[(ii)] $G$ contains an infinite cyclic $s$-normal subgroup.
  More precisely, either $G$ is infinite cyclic or it is an HNN-extension 
of the form $$G=\langle a,b, t\mid  a^t=b, w=1\rangle $$ of a 
2-generator 1-relator group
  $H= \langle a,b \mid w(a,b)\rangle $ with non-trivial center, so that 
$a^r=b^s$ in $H$ for some $r,s \in \Z \setminus\{0\}$. In the latter case $H$  
is (finitely generated free)-by-cyclic and contains a finite index normal 
subgroup splitting as a direct product  
of a finitely generated free group with an infinite cyclic group.
  \item[(iii)] $G$ is isomorphic to an ascending HNN extension
of a finite rank free group.
\end{itemize}
Moreover, the possibilities (i) and (ii) are mutually exclusive.
\end{prop}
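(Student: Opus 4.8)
The plan is to combine the Magnus--Moldavanskii theory of one-relator groups with the Minasyan--Osin criterion for acylindrical hyperbolicity of groups acting on trees. First I would dispose of the infinite cyclic case, which lands directly in (ii). For a non-cyclic $G=\langle a,b\mid w\rangle$, the abelianisation has free rank at least $1$, so $G$ surjects onto $\Z$ and splits as an HNN extension. Choosing (after a Nielsen change of basis, i.e.\ a $GL_2(\Z)$-move on the exponent-sum vector) a stable letter $t$ in which $w$ has exponent sum zero, the standard Magnus construction presents $G$ as an HNN extension $G=\langle B,t\mid t^{-1}C_+t=C_-\rangle$ whose base $B$ is a one-relator group on the finitely many $t$-shifts of the other generator and whose edge groups $C_\pm$ are finitely generated and free; the freeness is exactly the Freiheitssatz input already used in the proof of Theorem \ref{rel1}.

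The heart of the argument is the dichotomy coming from the action of $G$ on the Bass--Serre tree $T$ of this splitting. By Minasyan--Osin, if the action is of general type (minimal, no fixed end) and some edge stabiliser is weakly malnormal in $G$ (some conjugate meets it in a finite group), then $G$ is virtually cyclic or acylindrically hyperbolic; the virtually cyclic groups already lie in (ii), since they contain a finite-index, hence $s$-normal, copy of $\Z$, so otherwise we obtain case (i). Thus the work is to understand the two failures. The action fixes an end precisely when the HNN extension is ascending, i.e.\ one of $C_\pm$ equals $B$; in that case the ascending-union argument via the Freiheitssatz (again as in Theorem \ref{rel1}) forces $B$ to be free, so $G$ is an ascending HNN extension of a finite-rank free group, which is case (iii).

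The remaining possibility is that the extension is non-ascending (so $C_\pm$ are proper in $B$ and the action is of general type) but the free edge group $C$ fails weak malnormality, i.e.\ $C\cap gCg^{-1}$ is infinite for every $g\in G$. Here I would show that the only way a free edge group can fail weak malnormality over a one-relator HNN base is that $G$ carries an infinite cyclic $s$-normal subgroup, and that this forces the finer normal form in (ii): $G=\langle a,b,t\mid a^t=b,\ w=1\rangle$ over a two-generator one-relator group $H=\langle a,b\mid w\rangle$ whose edge generators satisfy $a^r=b^s$, that is, $H$ has non-trivial centre. The Murasugi--Pietrowski classification of one-relator groups with non-trivial centre then identifies $H$ as (finitely generated free)-by-cyclic, virtually a direct product of a free group with $\Z$, yielding the stated structure, the central $\Z$ being the required $s$-normal cyclic subgroup. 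Finally, the \emph{moreover} clause is immediate from the $s$-normal subgroup theorem used in the proof of Theorem \ref{gbs}: an $s$-normal subgroup of an acylindrically hyperbolic group is itself acylindrically hyperbolic, whereas $\Z$ is not, so (i) and (ii) exclude one another.

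I expect the main obstacle to be this last step: extracting the precise normal form and the centre of $H$ from the mere failure of weak malnormality of a free edge group. One must rule out more complicated degenerations---for instance a higher-rank free edge group all of whose conjugates meet it infinitely without a single cyclic direction being $s$-normal---and then upgrade ``contains an $s$-normal $\Z$'' to the rigid HNN presentation in (ii). This is where the detailed malnormality structure of free and cyclic subgroups inside one-relator HNN bases, together with the Bass--Serre theory of the splitting, must be pushed hardest.
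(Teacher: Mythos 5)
This proposition is not proved in the paper at all: it is quoted verbatim from \cite{motr} (Proposition 4.20) and used as a black box, so there is no in-paper argument to compare yours against. Judged on its own terms, your outline does track the strategy of the source: the Magnus--Moldavanskii rewriting of a two-generator one-relator group as an HNN extension over a one-relator base with free (Magnus) edge groups, the Minasyan--Osin tree criterion splitting the analysis into ``general type with a weakly malnormal edge group'' (giving (i)), ``ascending'' (giving (iii) via the Freiheitssatz, as in Theorem \ref{rel1}), and a degenerate remainder; and the mutual exclusivity of (i) and (ii) via the fact that an $s$-normal subgroup of an acylindrically hyperbolic group is itself acylindrically hyperbolic, exactly the device used in Theorem \ref{gbs}(i). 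Those parts are sound.

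The problem is that the entire substance of case (ii) is deferred rather than proved. You write that you ``would show that the only way a free edge group can fail weak malnormality over a one-relator HNN base is that $G$ carries an infinite cyclic $s$-normal subgroup'' and that this ``forces the finer normal form,'' and you then correctly identify this as the main obstacle. But that implication is the theorem: failure of weak malnormality only gives you, for every $g$, an infinite intersection $C\cap gCg^{-1}$ inside a free edge group of possibly large rank, and nothing in your sketch produces a single cyclic direction that is $s$-normal in all of $G$, let alone the rigid presentation $G=\langle a,b,t\mid a^t=b,\ w=1\rangle$ with $a^r=b^s$ in a centre-bearing base $H$. Closing this requires genuine one-relator input --- control of intersections of Magnus subgroups and of commuting/root relations in one-relator groups, before the Murasugi--Pietrowski--Moldavanskii classification of one-relator groups with non-trivial centre can even be invoked --- and none of that is supplied. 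As it stands the proposal is a correct reduction of the proposition to its hardest step, not a proof of it; since the paper simply cites \cite{motr} here, the honest course is to do the same rather than to present this sketch as an argument.
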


In (ii) Theorem C of \cite{krp} tells us that $G$ is a generalised
Baumslag - Solitar group. Thus we need to determine when generalised
Baumslag - Solitar groups do not have virtually sufficient homology. They
certainly do if the group is large. By Theorem 6.7 of \cite{lvrf},
if a generalised Baumslag - Solitar group $G$ is not large (nor cyclic)
then it may be defined by a reduced
graph that is a circle, with edges $e_i$ that
are consistently oriented and with non-zero integer labels $l_i$ and
$r_i$ at the left and the right of $e_i$ and such that $L=\Pi\,l_i$ and
$R=\Pi\, r_i$ are coprime with no $l_i$ or $r_i$ equal to $0$.
Thus if our group is not a Baumslag - Solitar group
then our circle will have 
$n\geq 2$ edges with $l_i,r_i\neq 0,\pm 1$ as the graph is reduced,
whereupon
$G$ will have the deficiency 1 presentation
\[\langle a_1,a_2,\ldots ,a_n,t\,|\,
  a_1^{l_1}=a_2^{r_1}, a_2^{l_2}=a_3^{r_2}, \ldots, a_n^{l_n}
  =ta_1^{r_n}t^{-1}\rangle\]
with $L=l_1\ldots l_n$ and $R=r_1\ldots r_n$ being coprime.

\begin{thm} \label{last} Suppose that
  $G$ is a finitely generated group defined by a
  1-relator presentation with at least 2 generators. If $G$ is neither
  acylindrically hyperbolic 
  nor has a finite index subgroup $H$ where for some prime $p$
  $H_{\hat{p}}$ is not $p$-adic analytic then $G$ is a generalised
  Baumslag - Solitar group with a presentation of the form above.
  Conversely if $G$ has such a presentation then it is neither
  acylindrically hyperbolic nor has a finite index subgroup
  $H$ with $H_{\hat{p}}$ being $p$-adic analytic for some prime $p$.
  \end{thm}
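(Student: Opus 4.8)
The plan is to prove the two implications separately: the forward one by running $G$ through Proposition~\ref{1rel2gen} and discarding every possibility except the circle GBS groups using the results of Sections~3--5, and the converse by combining Theorem~\ref{gbs}(i) with a homological computation. For the forward direction, assume $G$ is $1$-relator with $\geqs 2$ generators, is not acylindrically hyperbolic, and has no finite index subgroup $H$ whose $H_{\hat p}$ fails to be $p$-adic analytic. I first reduce to exactly two generators: a one-relator presentation on three or more generators has deficiency $\geqs 2$, so by Theorem~\ref{padan} $G$ would automatically have the pro-$p$ phenomenon, contradicting the hypothesis. Hence $G$ is $2$-generator $1$-relator and Proposition~\ref{1rel2gen} applies; possibility (i) is excluded because $G$ is not acylindrically hyperbolic, leaving (ii) and (iii).

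\emph{Discarding (iii) and analysing (ii).} In case (iii), $G$ is an ascending HNN extension of some $F_r$, and I claim $r\geqs 2$ is impossible. If the extension is non-strict then $G=F_r\rtimes\Z$ has virtually sufficient abelianisation by Theorem~\ref{fbyz}; if it is strictly ascending then either $G$ is hyperbolic, hence (since it contains $F_2$) acylindrically hyperbolic and excluded, or by \cite{mut} it contains some $BS(1,d)$, whereupon $G$ has virtually sufficient abelianisation by \cite{med1} when $d=\pm1$ and by Theorem~\ref{main} when $|d|\geqs 2$. In every surviving subcase Theorem~\ref{padan} converts virtually sufficient abelianisation into the pro-$p$ phenomenon, a contradiction; so $r=1$ and $G$ is $BS(1,n)$, $\Z\times\Z$ or the Klein bottle group, each a single-loop (so $n=1$) instance of the required presentation. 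In case (ii), Theorem~C of \cite{krp} shows $G$ is a generalised Baumslag--Solitar group; it is not cyclic, and it is not large, since a large group has virtually sufficient abelianisation and hence, by Theorem~\ref{padan}, the pro-$p$ phenomenon. Theorem~6.7 of \cite{lvrf} then yields precisely the circle presentation with $L$ and $R$ coprime. Thus in all surviving cases $G$ has a presentation of the stated form.

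\emph{Converse direction.} Now suppose $G$ has the displayed circle presentation with $L,R$ coprime. Being a generalised Baumslag--Solitar group, $G$ is not acylindrically hyperbolic by Theorem~\ref{gbs}(i). The presentation has deficiency $1$, so by Theorem~\ref{padan} the remaining assertion---that $G$ has no finite index subgroup whose pro-$p$ completion fails to be $p$-adic analytic---is equivalent to $G$ lacking virtually sufficient abelianisation in the sense of Definition~\ref{vsa}; that is, to the rank of $H_1(H,\F_p)$ being at most $2$ for every finite index subgroup $H$ and every prime $p$.

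\emph{The main obstacle.} Establishing this last bound is where I expect the real work. A finite index subgroup $H$ of $G$ acts on the same Bass--Serre tree $T$ and is again a generalised Baumslag--Solitar group, carried by the finite graph $H\backslash T$ which maps to the defining circle of $G$. The plan is to show that the coprimality of the two products $L$ and $R$ is inherited, in a suitable form, by every such $H$, and to deduce that the rank of $H_1(H,\F_p)$ is at most $2$ for all $p$ (for instance by showing $H_1(H,\Z)$ has free rank $1$ and cyclic torsion, as in the model computation $H_1(BS(m,n),\Z)\cong\Z\oplus\Z/(m-n)$ that handles the coprime Baumslag--Solitar case). The crux, and the step I expect to be hardest, is to bound the integral first homology---both its free rank and its $p$-torsion---uniformly over all finite index subgroups using only the coprimality condition; I would extract the required control from the proof of Theorem~6.7 of \cite{lvrf}, which already isolates exactly these non-large groups, together with the homological analysis of $BS(m,n)$ in \cite{me1r}.
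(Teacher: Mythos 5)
Your forward direction matches the paper's proof essentially step for step (Proposition \ref{1rel2gen}, discarding (i) and (iii) via Sections 4--5, and in case (ii) invoking Kropholler and Levitt), and is fine. The gap is in the converse: you correctly reduce the remaining claim to showing that every group with the displayed circle presentation lacks virtually sufficient abelianisation, but you then only outline a strategy (controlling $H_1$ of finite index subgroups via their induced graph-of-groups decompositions and ``extracting control'' from the proof of Levitt's Theorem 6.7) and explicitly leave the crux unproved. That is precisely the step the theorem needs, and your proposed route would be substantially harder than what is actually required.

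The paper's argument is a short direct computation that you are missing. From the relations one gets $a_i^{r_{i-1}\cdots r_1}=a_1^{l_{i-1}\cdots l_1}$ and $a_i^{l_i\cdots l_n}=ta_1^{r_1\cdots r_n}t^{-1}$; since $r_{i-1}\cdots r_1$ and $l_i\cdots l_n$ are coprime (as $L$ and $R$ are coprime), each $a_i$ lies in $\langle t,a_1\rangle$, so $G$ is generated by $t$ and $a_1$, and moreover $ta_1^Rt^{-1}=a_1^L$ holds in $G$. Hence $G$ is a quotient of $BS(R,L)$ with $R,L$ coprime. Failure of virtually sufficient abelianisation passes to quotients (a finite index subgroup of the quotient pulls back to one of $BS(R,L)$ of the same index, and its mod $p$ homology can only be smaller), so $G$ inherits it from $BS(R,L)$, for which the property is known (\cite{me1r}). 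There is no need to analyse the Bass--Serre tree or re-enter Levitt's proof. I would also note that your fallback model computation $H_1(BS(m,n),\Z)\cong\Z\oplus\Z/(m-n)$ only addresses the group itself, not its finite index subgroups, so even in the genuine Baumslag--Solitar case your sketch does not yet bound the ranks uniformly; the quotient-of-$BS(R,L)$ reduction is what makes the uniform bound available.
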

\begin{proof}
  We have now excluded the cyclic case and 3 or more generators follows
  as before. Case (i) in the above gives us acylindrical hyperbolicity
  and Case (iii) is dealt with using the results in Sections 4 and 5
  except when the free group has rank 0 or 1, giving us $BS(1,n)$. Thus
  we are left in Case (ii) whereupon we now know that $G$ is
  a generalised Baumslag - Solitar group which is not large, because of
  its lack of virtually sufficient homology, so $G$ has some presentation
  of the form above.

  We now need to show that all groups defined by such a presentation
(which is of deficiency one)
  fail to have virtually sufficient homology. Note that
  \[a_i^{r_{i-1}\ldots r_1}=a_1^{l_{i-1}\ldots l_1}\mbox{ for any }
    2\leqs i\leqs n\] and
  \[ a_i^{l_i\ldots l_{n-1}l_n}=a_n^{r_i \ldots r_{n-1}l_n}=ta_1^{r_1\ldots r_n}t^{-1}
      \mbox{ for any } 1\leqs i\leqs n.\]
    As $r_{i-1}\ldots r_1$ and $l_i\ldots l_n$ are coprime, we have that
    $a_i\in\langle t,a_1\rangle$ so that $G$ is generated by $t$ and $a_1$.
    Moreover we have shown that the relation $ta_1^Rt^{-1}=a_1^L$ holds
    in $G$ so that $G$ is a quotient of $BS(R,L)$ for $R,L$ coprime. As the
    latter does not have virtually sufficient homology, nor does the
    former. Also $G$ is not acylindrically hyperbolic by
    Theorem \ref{gbs} (i).
\end{proof}
To show there exist groups in Case (ii) which are not genuine
Baumslag - Solitar groups, take $n=2$ in the presentation above so that
we have
\[G:=\langle a,b,t\,|\,
  a^i=b^j, b^k
  =ta^lt^{-1}\rangle\]
where $ik$ is coprime to $jl$ and with none of $i,j,k,l$ equal to $\pm 1$.
Using a ``non-abelian'' Euclidean
algorithm on the coprime integers $j$ and $k$, we can perform Nielsen
moves on our presentation by reducing the powers
$b^j$ and $b^k$ until we are left with $b=u(a,t)$ and $b^m=v(a,t)$ for
some $m$, whereupon we can substitute $u$ into $b^m$, thus
eliminating $b$ and giving us a 2-generator 1-relator presentation
on $a$ and $t$.
By \cite{frdfrm} $G$ is not isomorphic to a Baumslag - Solitar group because
the graph defining $G$ is strongly slide free.

For instance taking
\[G:=\langle a,b,t\,|\,
  a^2=b^3, b^2
  =ta^3t^{-1}\rangle\]
gives us that $b=b^3b^{-2}=a^2ta^{-3}t^{-1}$ so that
$G=\langle t,a\,|\,a^2=(a^2ta^{-3}t^{-1})^3\rangle$.
On setting $u=tat^{-1}$, we see this is an HNN extension
of $H=\langle a,b,u\,|\,a^2=b^3, b^2=u^3\rangle$, namely
$\langle u,a\,|\,a^2=(a^2u^{-3})^3\rangle$ which is a 
2-generator 1-relator group with non-trivial center since $a^4=u^9$,
thus recovering the form of $G$ in Case (ii).

We finish with a question:
\begin{question}
  With the exception of cyclic groups and $BS(1,n)$, does every 1-relator
  group or even group of deficiency 1 have infinite dimensional second bounded
  cohomology?
\end{question}
This reduces to the case of strictly ascending HNN extensions of
finitely generated free groups, as well as needing to eliminate the
existence of strictly ascending HNN extensions of other finitely
generated groups for the deficiency one version of this question.

\end{document}